\title{Penalization for non-linear hyperbolic system}
\author{Thomas Auphan}
\date\today
\begin{document}
\newtheorem{definition}{Definition}[section]
\newtheorem{theorem}{Theorem}[section]
\newtheorem{proposition}{Proposition}[section]
\newtheorem{lemma}{Lemma}[section]

\renewcommand{\b}[1]{\mathbf{#1}}
\renewcommand{\u}[1]{\underline{#1}}

\maketitle 
\begin{center}
tauphan@cmi.univ-mrs.fr\\
Aix Marseille Universit\'e, CNRS, Centrale Marseille, LATP, UMR 7353, 13453 Marseille France
\end{center}

\begin{abstract}

This paper proposes a volumetric penalty method to simulate the boundary conditions for a non-linear hyperbolic problem. The boundary conditions are assumed to be maximally strictly dissipative on a non-characteristic boundary. This penalization appears to be quite natural since, after a natural change of variable, the penalty matrix is an orthogonal projector. We prove the convergence towards the solution of the wished hyperbolic problem and that this convergence is sharp in the sense that it does not generate any boundary layer, at any order. The proof involves an approximation by asymptotic expansion and energy estimates in anisotropic Sobolev spaces. 
%
\end{abstract}

\let\thefootnote\relax\footnotetext{AMS subject classifications: 35L60, 65N85.}


\section{Introduction}

Non-linear hyperbolic conservation laws models are very common in fluid mechanics, for example let us just cite Euler, MHD and shallow water equations. The physical domain of the fluid is sometimes quite complex and this can be the source of difficulties to provide an efficient numerical scheme. 
Usually, the boundary conditions need a special treatment with a body-fitted mesh for the implementation in simulation codes. Beside the mesh of the scheme often has to be fitted to the shape of the domain. Penalization, such as other immersed boundary methods, can lead to a simpler treatment of the boundary condition and allows one to use fast numerical solver, such as pseudo-spectral solver for instance, see \cite{Jau12,Kol09}. The solution of the original $\b u$ is approximated by the solution of the penalized problem $\b u_{\varepsilon}$, where $\varepsilon \ll 1$ is the penalization parameter. Thus, the error $\|\b u_{\varepsilon} - \b u\|_{H^s}$ has to be controlled. In the optimal case, $\|\b u_{\varepsilon} - \b u\|_{H^s} = \mathcal{O}(\varepsilon)$ when $\varepsilon$ tends to $0$. In the non optimal case, the penalty methods generates boundary layers which ensures a connection between the physical domain and the penalized area. This boundary layer aggravates the convergence rate, even in some cases the $H^1$ penalization error may increases when $\varepsilon$ tends to $0$ because of the generation of oscillations \cite{Ang12,Liu07}.

Immersed boundary methods have been first implemented by Peskin for the numerical simulations of the flow around heart valves \cite{Pes72}. Some examples of application of penalization method are also given by fish-like swimming simulations, see for instance \cite{Ber11}. Error analysis of penalization method for incompressible viscous flow equations, using a BKW method has been performed by Carbou and Fabrie \cite{Car03}. For the wave equation in the one dimensional case, Paccou \emph{et al.} provides a 
theoretical and numerical study of a $L^2$ penalization for a Dirichlet boundary condition \cite{Pac05}.
Penalty method has already been proposed in the semi-linear characteristic case by Fornet and Gu\`es \cite{For09}. The main result of this paper is a penalization technique for a quasilinear hyperbolic problem which does not generate any boundary layer.
To simplify some parts of the proofs, the notation $\partial_0=\partial_t$ has been sometimes used in this paper.
 
\section{Main result}\label{sect_main_result}
In order to avoid issues of compatibility of the initial condition and to focus on the penalization's problem, we consider a boundary value problem instead of an initial boundary value problem. For the same technical reason, we suppose that the solution is null in the past, \emph{i.e.} for $t<0$.

We could consider the non-linear hyperbolic boundary-value problem presented below: 
\begin{equation}
\displaystyle
\left\{\begin{array}{ll} 
\partial_t \b u(t,\b x) + \sum_{j=1}^{d}{ \bar{\b A}_j(\b u(t,\b x)) \partial_j \b u(t,\b x)} = \bar{\b f}(t,\b x, \b u(t,\b x)) & (t,\b x) \in ]-T_0,T[ \times \mathbb{R}^d_+ \\
\b \Theta(\b u(t,\b x',0))=\b 0 & (t, \b x') \in ]-T_0,T[ \times \mathbb{R}^{d-1} \textsf{, \emph{i.e.} } x_d=0\\
\b u_{|t<0} = \b 0 & 
\end{array}
\right.
\end{equation}
But this form does not take into account of some parameters related to $t,\b x$ but not to $\b u$ such as, for instance, the refraction index, the viscosity... In order to have a more general problem which can be applied to a wide range of physical models, let us add a function $\b a : ]-T_0,T[ \times \mathbb{R}^d \to \mathbb{R}^{N'}$ which is supposed to include all this type of information.

Finally, let us consider a hyperbolic boundary-value problem of the form:
\begin{equation}\label{Original_problem}
\displaystyle
\left\{\begin{array}{ll} 
\partial_t \b u(t,\b x) + \sum_{j=1}^{d}{ \bar{\b A}_j(\b a (t,\b x),\b u(t,\b x)) \partial_j \b u(t,\b x)} = \bar{\b f}(\b a(t,\b x),\b u(t,\b x)) & (t,\b x) \in ]-T_0,T[ \times \mathbb{R}^d_+ \\
\b \Theta(\b a (t,\b x',0), \b u(t,\b x',0))=\b 0 & (t,\b x') \in ]-T_0,T[ \times \mathbb{R}^{d-1} \textsf{, \emph{i.e.} } x_d=0\\
\b u_{|t<0} = \b 0 & 
\end{array}
\right.
\end{equation}
In this paper, the space variable writes $\b x=(x_1,\dots,x_d)=(\b x',x_d)$. The space domain is represented in the figure \ref{Fig_domain}.
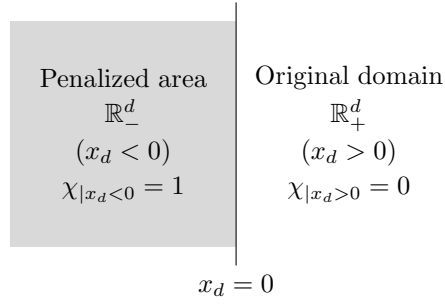
\begin{figure}
\begin{center}
\begin{tikzpicture}
\fill [color=gray!30] (-3,-1.5) rectangle (0,1.5);
\draw (-1.5,0.75) node{Penalized area};
\draw (-1.5,0.25) node{$\mathbb{R}^d_-$};
\draw (-1.5,-0.25) node{$(x_d<0)$};
\draw (-1.5,-0.75) node{$\chi_{|x_d<0}=1$};
\draw (1.5,0.75) node{Original domain};
\draw (1.5,0.25) node{$\mathbb{R}^d_+$};
\draw (1.5,-0.25) node{$(x_d>0)$};
\draw (1.5,-0.75) node{$\chi_{|x_d>0}=0$};
\draw (0,-1.75) -- (0,1.75);
\draw (0,-1.75) node[below]{$x_d=0$};
\end{tikzpicture}
\caption{A schematic representation of the space domain}
\label{Fig_domain}
\end{center}
\end{figure}

We make the following assumptions about the hyperbolic problem (\ref{Original_problem}):
\begin{enumerate}
 \item $\b a : ]-T_0,T[ \times \mathbb{R}^d \to \mathbb{R}^{N'}$ is in $H^{\infty}(]-T_0,T[ \times \mathbb{R}^d)$.
 \item $\bar{\b f} : \mathbb{R}^{N'} \times \mathbb{R}^N \to \mathbb{R}^N$ is $\mathcal{C}^\infty$ and, for all $\b y \in \mathbb{R}^{N'}, \bar{\b f}(\b y, \b 0)= \b 0$.
 \item $\b \Theta : \mathbb{R}^{N'} \times \mathbb{R}^N \to \mathbb{R}^p$ is $\mathcal{C}^\infty$ and for all $(\b y,\b U) \in \mathbb{R}^{N'} \times \mathbb{R}^N,\nabla_{\b u} \b \Theta\left(\b y, \b U\right)$ has a constant rank $p$. Besides, for all $\b y \in \mathbb{R}^{N'}, \b \Theta(\b y,\b 0)=\b 0$.
 \item For every $j$, $\bar{\b A}_j :\mathbb{R}^{N'} \times \mathbb{R}^N \to \mathcal{M}_N(\mathbb{R})$ is $\mathcal{C}^\infty$.
 \item There exists a symmetrizer $\b S(\b y, \b U)$ such that, for all $(\b y, \b U) \in \mathbb{R}^{N'} \times \mathbb{R}^N$:
 \begin{itemize}
  \item $\b S(\b y, \b U)$ is symmetric and positive definite, uniformly in $(\b y, \b U)$ when $\b U$ is in a neighbourhood $\mathcal{U} \subset \mathbb{R}^N$ of $\b 0$ and $\b y$ in a neighbourhood $\mathcal{Z} \subset \mathbb{R}^{N'}$ of $\b 0$. This means that there exists $\bar{e}>0$ such that, for all $(\b y, \b U) \in \mathcal{Z} \times \mathcal{U}$, and for all $\b W \in \mathbb{R}^N$, $\langle \b S(\b y, \b U) \b W, \b W \rangle \geq \bar{e} \|\b W\|^2$, where $\langle , \rangle$ and $\|.\|$ are respectively the euclidean scalar product and norm on $\mathbb{R}^N$.
  \item For all $j \in \{1,\dots,d\}$, $\b S(\b y, \b U) \bar{\b A}_j(\b y, \b U)$ is symmetric.
 \end{itemize} 
\end{enumerate}

We assume that the problem is non characteristic, \emph{i.e.} for all $(\b y, \b U) \in \mathbb{R}^{N'} \times \mathbb{R}^N$ such that $\b \Theta(\b y,\b U)=\b 0$, the matrix $\bar{\b A}_d(\b y, \b U)$ is invertible.
The boundary conditions are assumed to be maximally strictly dissipative: For all $\b y \in \mathcal{Z}$, if there exists $\b U \in \mathbb{R}^N$ such that  $\b \Theta(\b y, \b U)=\b 0$, the quadratic form have the following properties:
\begin{itemize}
 \item $\exists \bar{\mu}>0, \forall \b y \in \mathbb{R}^{N'}, \forall \b W \in \ker \nabla_{\b u} \b \Theta(\b y,\b 0), \langle \b S(\b y, \b U)\bar{\b A}_d(\b y, \b U) \b W, \b W \rangle \leq -\bar{\mu} \| \b W \|^2$.
 \item $\dim \ker \nabla_{\b u} \b \Theta(\b y,\b 0)$ is maximal for the property above.
\end{itemize}

According to \cite{Gue90,Rau74}, one can assert there exists a finite time $\theta>0$ such that the original problem (\emph{cf.} equation (\ref{Original_problem})) admits a unique solution $\b u$ in $H^{\infty}(]-T_0,\b \theta[\times \mathbb{R}^d_+)$.

\begin{lemma}\label{lemma_ch_unknown}
There exists $\mathcal{Q}  \subset \mathcal{U}, \mathcal{V}$, two neighbourhoods of $\b 0 \in \mathbb{R}^N$ and  $\mathcal{Y} \subset \mathcal{Z}$ a neighbourhood of $\b 0 \in \mathbb{R}^{N'}$ satisfying: there exists $\b H \in \mathcal{C}^{\infty}\left(\mathcal{Y}\times \mathcal{V},\mathcal{Q}\right)$ such that, for all $\b y \in \mathcal{Y}$, $\b H(\b y, . )$ is a $\mathcal{C}^{\infty}$-diffeomorphism from $\mathcal{V}$ to $\mathcal{Q}$ and such that
\begin{equation*}
\begin{array}{l}
 \forall \b U \in \mathcal{Q}, \forall \b y \in \mathcal{Y}, \b \Theta(\b y,\b U)=\b 0 \Longleftrightarrow V_1 = V_2 = \dots = V_p = 0\\
 \text{and } \forall \b y \in \mathcal{Y}, \b H(\b y,\b 0)=\b 0
\end{array}
\end{equation*}
Where $\b V \in \mathbb{R}^N$ is such that $\b U=\b H(\b y, \b V)$ and $(V_1,\dots,V_N)=\b V$.
\end{lemma}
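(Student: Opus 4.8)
The plan is to recognize this statement as a parameter-dependent version of the submersion normal form (the straightening of the level sets of a submersion) and to obtain it from the inverse function theorem. Since $\nabla_{\b u} \b \Theta(\b 0, \b 0)$ has rank $p$, this $p \times N$ matrix has $p$ linearly independent columns; after relabelling the coordinates of $\b U$ (a linear change of variable that I keep implicit), I may assume these are the columns corresponding to $U_1,\dots,U_p$, so that the $p\times p$ minor $\nabla_{(U_1,\dots,U_p)}\b \Theta(\b 0,\b 0)$ is invertible. The constant-rank hypothesis is used only through the rank at the origin, which is what guarantees the existence of this invertible minor.

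First I would introduce the auxiliary map
\[
\b \Phi(\b y,\b U) := \bigl(\b \Theta(\b y,\b U),\, U_{p+1},\dots,U_N\bigr)\in\mathbb{R}^p\times\mathbb{R}^{N-p}=\mathbb{R}^N,
\]
which is $\mathcal{C}^\infty$ and satisfies $\b \Phi(\b y,\b 0)=\b 0$ for all $\b y$ because $\b \Theta(\b y,\b 0)=\b 0$. Its differential in $\b U$ at the origin has the block-triangular form
\[
\partial_{\b U}\b \Phi(\b 0,\b 0)=\begin{pmatrix} A & B \\ 0 & I_{N-p}\end{pmatrix},\qquad A=\nabla_{(U_1,\dots,U_p)}\b \Theta(\b 0,\b 0),
\]
hence is invertible since $A$ is. The reason for keeping the $p$ components of $\b \Theta$ in the first slots of $\b \Phi$ is that, once we set $\b V=\b \Phi(\b y,\b U)$, the equation $\b \Theta(\b y,\b U)=\b 0$ becomes exactly $V_1=\dots=V_p=0$.

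To obtain the smooth inverse with the required \emph{uniform} dependence on the parameter, I would apply the inverse function theorem not to $\b \Phi(\b y,\cdot)$ for fixed $\b y$, but to the extended map $(\b y,\b U)\mapsto(\b y,\b \Phi(\b y,\b U))$ on $\mathbb{R}^{N'}\times\mathbb{R}^N$, whose differential at $(\b 0,\b 0)$ is
\[
\begin{pmatrix} I_{N'} & 0 \\ \partial_{\b y}\b \Phi & \partial_{\b U}\b \Phi\end{pmatrix}
\]
and is invertible by the previous step. This yields neighbourhoods $\mathcal{Y}$ of $\b 0\in\mathbb{R}^{N'}$ and $\mathcal{V},\mathcal{Q}\subset\mathcal{U}$ of $\b 0\in\mathbb{R}^N$ on which the extended map is a $\mathcal{C}^\infty$-diffeomorphism; its inverse has the form $(\b y,\b V)\mapsto(\b y,\b H(\b y,\b V))$ with $\b H\in\mathcal{C}^\infty(\mathcal{Y}\times\mathcal{V},\mathcal{Q})$, and for each fixed $\b y$ the map $\b H(\b y,\cdot)=\b \Phi(\b y,\cdot)^{-1}$ is a diffeomorphism from $\mathcal{V}$ onto $\mathcal{Q}$. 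Reading off the first $p$ components of $\b V=\b \Phi(\b y,\b U)$ gives the equivalence $\b \Theta(\b y,\b U)=\b 0\Longleftrightarrow V_1=\dots=V_p=0$, while $\b \Phi(\b y,\b 0)=\b 0$ gives $\b H(\b y,\b 0)=\b 0$.

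The only genuine difficulty is precisely this uniformity of the neighbourhoods in $\b y$: a naive fibrewise application of the inverse function theorem would produce a $\b y$-dependent domain, and the passage to the extended map above is what I expect to be the key point needed to handle it cleanly. Everything else reduces to the direct verifications recorded above.
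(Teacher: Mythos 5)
Your proposal is correct and follows essentially the same route as the paper: the paper's map $\b Z(\b U,\b y)=(\b \Theta(\b y,\b U),U_{p+1},\dots,U_N,\b y)$ is exactly your extended map (with the parameter block appended at the end rather than the front), and the paper likewise invokes the inverse function theorem on this joint map and takes the first $N$ components of the inverse to define $\b H$. Your additional remarks — the block-triangular form of the differential, the reason for carrying $\b y$ along to get neighbourhoods uniform in the parameter, and the observation that only the rank at the origin is used — are all consistent with, and slightly more explicit than, the paper's argument.
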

\begin{proof}[Proof of the lemma \ref{lemma_ch_unknown}:] The matrix $\nabla_{\b u} \b \Theta(\b 0, \b 0)$ has rank $p$. Eventually re-arranging the terms, let us assume that the square matrix of size $p$ chose columns are $\partial_{u_i} \b \Theta(\b 0, \b 0)$ ($i \in \{1,\dots,p\}$) is invertible.

Let us define the function $\b Z:  (\b U,\b y) \mapsto (\b \Theta(\b y,\b U),U_{p+1}, \dots, U_N,\b y)$ and write $\b V=(\b \Theta(\b y,\b U),U_{p+1}, \dots, U_N)$.

Observe that $\nabla_{\b u,\b a} \b Z(\b 0,\b 0)$ is invertible. The inverse function theorem proves the existence of the neighbourhoods $\mathcal{Q} \subset \mathcal{U} \subset \mathbb{R}^N$ and  $\mathcal{Y} \subset \mathcal{Z} \subset \mathbb{R}^{N'}$ such that $\b Z$ is a $\mathcal{C}^{\infty}$-diffeomorphism defined on $ \mathcal{Q} \times \mathcal{Y}$. The $N$ first components of $\b Z^{-1}$ generates the change of unknown function $\b H$.
\end{proof}

Henceforth, the function $\b a$ is assumed to be valued in the neighbourhood $\mathcal{Y}$.
The proof of the lemma \ref{lemma_ch_unknown} contains a simple choice for the change of unknown $\b H$.

In order to simplify the notations, the dependence of the functions and matrices on $(t,\b x)$ and $\b a(t,\b x)$ is now implicit. So, for instance, $\bar{\b A}_j(\b u)$ stands for $(t,\b x) \mapsto \bar{\b A}_j(\b a(t,\b x),\b u(t,\b x))$ and $\partial_j \left(\bar{\b A}_j(\b u)\right)$ means $\nabla_{\b a} \bar{\b A}_j(\b a(t,\b x),\b u(t,\b x)) \cdot\partial_j \b a(t, \b x)  +  \nabla_{\b u} \bar{\b A}_j(\b a(t,\b x),\b u(t,\b x)) \cdot \partial_j \b u(t, \b x)$.

$\b P$ is defined as the matrix of the projection on the linear subspace $\mathbb{R}^p \times \{0\}^{N-p}$ written in the canonical basis. The boundary condition with the new variables becomes $\b P \b v = \b 0$. For the new unknown $\b v$, the system writes (the parameter function $\b a$ is understood):
\begin{equation} \label{Change_unknown_pb}
\displaystyle
\left\{\begin{array}{ll} 
 \nabla_{\b v} \b H(\b v) \, \partial_t \b v + \sum_{j=1}^{d}{\bar{\b A}_j\left(\b H(\b v)\right) \nabla_{\b v} \b H(\b v) \partial_j \b v } = \bar{\b f}\left(\b H(\b v)\right) & \textsf{ in } ]-T_0,T[ \times \mathbb{R}^d_+\\
 \b P \b v_{|x_d=0}=\b 0 & \textsf{ in } ]-T_0,T[ \times \mathbb{R}^{d-1}
 \end{array}
\right.
\end{equation}
The system is then multiplied on the left by $\nabla_{\b v} \b H(\b v)^{\top} \b S\left(\b H(\b v) \right)$ to obtain:
\begin{equation}\label{Reformulated_pb}
\displaystyle
\left\{\begin{array}{ll} 
 \b A_0(\b v) \, \partial_t \b v + \sum_{j=1}^{d}{\b A_j(\b v) \partial_j \b v } = \b f(\b v) & \textsf{ in } ]-T_0,T[ \times \mathbb{R}^d_+\\
 \b P \b v_{|x_d=0}=\b 0 & \textsf{ in } ]-T_0,T[ \times \mathbb{R}^{d-1}\\
  \b v_{|t<0} = \b 0 & \text{ in } ]-T_0,0[ \times \mathbb{R}^d_+
 \end{array}
\right.
\end{equation}
In this new formulation, the functions $\b A_j$ and $\b f$ are:
\begin{align*}
\b A_0(\b v) &= \nabla_{\b v} \b H(\b v)^{\top} \b S\left(\b H(\b v)\right) \nabla_{\b v} \b H(\b v)\\
\b A_j(\b v) &= \nabla_{\b v} \b H(\b v)^{\top} \b S\left(\b H(\b v)\right) \bar{\b A}_j(\b v) \nabla_{\b v} \b H(\b v)\\
\b f(\b v) &= \nabla_{\b v} \b H(\b v)^{\top} \b S\left(\b H(\b v)\right) \left(\bar{\b f}\left(\b H(\b v)\right)-\nabla_{\b a} \b H(\b v) \cdot \partial_t \b a - \sum_{j=1}^{d}{\bar{\b A}_j(\b v) \nabla_{\b a} \b H(\b v) \cdot \partial_j \b a} \right)
\end{align*}
According to the properties on $\b S \left(\b H(\b v) \right)$ and $\nabla_{\b v} \b H(\b v)$, we can assert that $\b A_0(\b y,\b V)$ is uniformly positive definite regarding $(\b y, \b V)$, where $\b y \in \mathcal{Y}$ and $\b V$ such that $\b H(\b y, \b V) \in \mathcal{Q}$. Hence,
there exists $e>0$ (independent of $\b V$) such that, for all $\b y$ and for all $\b W \in \mathbb{R}^N$, $\langle \b A_0(\b y, \b V) \b W, \b W \rangle \geq e \|\b W\|^2$.

The next lemma recalls a classical invariance property (that can be easily checked):
\begin{lemma}
If the original problem (\ref{Original_problem}) has maximally strictly dissipative boundary conditions, the reformulated problem (\ref{Reformulated_pb}) has also maximal strictly dissipative boundary conditions.
\end{lemma}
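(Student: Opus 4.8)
The plan is to verify directly that the maximal strict dissipativity of the boundary conditions is preserved under the change of unknown and the left-multiplication that produced \eqref{Reformulated_pb}. The key point is that maximal strict dissipativity is a property of the symmetric bilinear form $\langle \b S \bar{\b A}_d \b W, \b W\rangle$ restricted to the kernel of $\nabla_{\b u}\b\Theta$ at the boundary, together with a maximality statement on the dimension of that kernel. I would show that under the substitution $\b U = \b H(\b y,\b V)$ this form is transported into $\langle \b A_d(\b V)\b W,\b W\rangle$ on the corresponding subspace, and that the relevant kernel is carried to $\ker \b P = \{0\}^p \times \mathbb{R}^{N-p}$.

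First I would identify the symmetrizer of the reformulated system. The natural candidate is $\b A_0(\b v)$ itself: the new system is obtained by multiplying \eqref{Change_unknown_pb} on the left by $\nabla_{\b v}\b H(\b v)^{\top}\b S(\b H(\b v))$, so by construction each $\b A_j(\b v)=\nabla_{\b v}\b H(\b v)^{\top}\b S(\b H(\b v))\bar{\b A}_j(\b v)\nabla_{\b v}\b H(\b v)$ is symmetric (since $\b S\bar{\b A}_j$ is symmetric by assumption), $\b A_0$ is symmetric positive definite (already established in the excerpt), and the identity $\b A_0^{-1}\b A_d$ plays the role formerly played by $\bar{\b A}_d$. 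Evaluated at the boundary state $\b V$ with $\b P\b V=\b 0$, I would relate the quadratic form $\langle \b A_d(\b V)\b W,\b W\rangle = \langle \b S(\b U)\bar{\b A}_d(\b U)\,\nabla_{\b v}\b H\,\b W,\,\nabla_{\b v}\b H\,\b W\rangle$, i.e. the transformed form is just the original dissipation form evaluated at the transported vector $\b W' = \nabla_{\b v}\b H(\b v)\,\b W$.

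Next I would match up the kernels. By Lemma~\ref{lemma_ch_unknown}, the boundary condition $\b\Theta(\b y,\b U)=\b 0$ becomes $\b P\b V=\b 0$, so differentiating $\b\Theta(\b y,\b H(\b y,\b V))$ in $\b V$ at the boundary shows that $\nabla_{\b u}\b\Theta(\b y,\b 0)\,\nabla_{\b v}\b H(\b y,\b 0)$ has the same kernel as $\b P$, namely $\{0\}^p\times\mathbb{R}^{N-p}$. Since $\nabla_{\b v}\b H$ is invertible, it maps $\ker \b P$ isomorphically onto $\ker \nabla_{\b u}\b\Theta(\b y,\b 0)$. Consequently, for every $\b W\in\ker\b P$ the transported vector $\b W'=\nabla_{\b v}\b H\,\b W$ ranges over $\ker\nabla_{\b u}\b\Theta(\b y,\b 0)$, and the strict negativity bound $\langle \b S\bar{\b A}_d\b W',\b W'\rangle\le -\bar\mu\|\b W'\|^2$ transports to a bound $\langle \b A_d(\b V)\b W,\b W\rangle\le -\mu'\|\b W\|^2$ on $\ker\b P$; here I would use that $\nabla_{\b v}\b H$ and its inverse are bounded on the relevant neighbourhood to convert $\|\b W'\|$ back into $\|\b W\|$ with a uniform constant $\mu'>0$. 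The maximality of $\dim\ker\b P = N-p$ follows from the maximality of $\dim\ker\nabla_{\b u}\b\Theta(\b y,\b 0)$ together with the fact that $\nabla_{\b v}\b H$ is an isomorphism, so dimensions are preserved.

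I expect the main obstacle to be purely bookkeeping rather than conceptual: one must be careful that all estimates are uniform in $\b y\in\mathcal{Y}$ and in the boundary state, and that the strict dissipativity is stated with respect to the new symmetrizer $\b A_0$ in the correct normalization. Because $\b A_0$ is uniformly positive definite and $\nabla_{\b v}\b H$ is a diffeomorphism with uniformly bounded differential on the compact-closure neighbourhoods, these uniformity issues are routine. The statement that this ``can be easily checked'' is therefore justified: the change of variables is a congruence by the invertible matrix $\nabla_{\b v}\b H$, and congruence preserves both the sign of a quadratic form on a subspace and the dimension of that subspace.
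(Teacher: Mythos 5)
Your proposal is correct, and it supplies precisely the argument the paper leaves implicit: the paper states this lemma as a classical invariance property ``that can be easily checked'' and gives no proof at all. Your congruence argument --- transporting the quadratic form by the invertible matrix $\nabla_{\b v}\b H$, identifying $\nabla_{\b v}\b H(\ker\b P)=\ker\nabla_{\b u}\b\Theta$ by differentiating $\b\Theta(\b y,\b H(\b y,\b V))$, and invoking preservation of negativity and of the dimension (Sylvester's law of inertia for the maximality count) --- is exactly the intended verification, with the uniformity in $\b y$ and in the boundary state handled as you describe.
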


For the reformulated problem (\ref{Reformulated_pb}), the property of maximally strictly dissipative boundary conditions means: For all $\b V \in \mathbb{R}^N$ such that $\b P \b V=\b 0$, the quadratic form have the following properties:
\begin{itemize}
 \item $\exists \mu>0, \forall \b W \in \ker \b P, \forall \b y \in \mathcal{Y}, \langle \b A_d(\b y, \b V) \b W, \b W\rangle \leq -\mu \| \b W \|^2$
 \item $N-p$ is the number of strictly negative eigenvalues of $\b A_d(\b y, \b V)$ with multiplicity. Thus, with multiplicity, there are $p$ strictly positive eigenvalues.
\end{itemize}

Let us now introduce the following penalized system, which is the main concern of the paper
\begin{equation}\label{Penalized_syst}
\displaystyle
\left\{\begin{array}{ll}
 \b A_0(\b v_{\varepsilon}) \, \partial_t \b v_{\varepsilon} + \sum_{j=1}^{d}{\b A_j(\b v_{\varepsilon}) \partial_j \b v_{\varepsilon} } + \dfrac{\chi}{\varepsilon} \b P \b v_{\varepsilon} = \b f(\b v_{\varepsilon}) 	& \textsf{ in } ]-T_0,T[ \times \mathbb{R}^d\\
 \b v_{\varepsilon \, | t<0}=\b 0				& \textsf{ in } ]-T_0,0[ \times \mathbb{R}^d
\end{array}\right.
\end{equation}
where $\chi$ is the characteristic function of the obstacle, \emph{i.e} $\chi_{|x_d\leq 0}=1$ and $\chi_{|x_d> 0}=0$, see the figure \ref{Fig_domain}.

%
%
Notice that the boundary condition of the reformulated problem (\ref{Reformulated_pb}) is  $\b P \b v_{|x_d=0}=\b 0$ and the penalization term added in the penalized system (\ref{Penalized_syst}) simply writes $\dfrac{\chi}{\varepsilon} \b P \b v_{\varepsilon}$. Thus, when $\varepsilon$ tends to $0$, from the formal point of view, one recovers the boundary condition $\b P \b v_{\varepsilon | x_d=0} \approx \b 0$.
The main result of this paper is Theorem \ref{Th_reform} (see below) which ensure that the penalized system (\ref{Penalized_syst}) is well-posed and provides an estimation of the error due to the penalization.

\begin{theorem}\label{Th_reform}
Under the assumptions presented above, there exists a finite time $T \in ]0, \theta[$ and $\varepsilon_0>0$ such that, for all $\varepsilon \in ]0,\varepsilon_0]$, the penalized problem 
\begin{equation}
\displaystyle
\left\{\begin{array}{ll}
 \b A_0(\b v_{\varepsilon}) \, \partial_t \b v_{\varepsilon} + \sum_{j=1}^{d}{\b A_j(\b v_{\varepsilon}) \partial_j \b v_{\varepsilon} } + \dfrac{\chi}{\varepsilon} \b P \b v_{\varepsilon} = \b f(\b v_{\varepsilon}) & \textsf{ in } ]-T_0,T[ \times \mathbb{R}^d\\
 \b v_{\varepsilon | t<0} = \b 0 & 
\end{array}\right.
\end{equation}
has a unique solution $\b v_{\varepsilon} \in H^1(]-T_0,T[\times \mathbb{R}^d) \cap W^{1,\infty}(]-T_0,T[\times \mathbb{R}^d)$. Besides, $\b v_{\varepsilon}$ is smooth on each side of the interface $x_d=0$, \emph{i.e.}, $\b v_{\varepsilon | x_d>0} \in H^{\infty}(]-T_0,T[\times \mathbb{R}^d_+)$ and $\b v_{\varepsilon | x_d<0} \in H^{\infty}(]-T_0,T[\times \mathbb{R}^d_-)$.

Moreover, for all $s \in \mathbb{N}$, the following estimate holds as $\varepsilon$ goes to $0$:
\begin{equation*}
\|\b v - \b v_{\varepsilon}\|_{H^s(]-T_0,T[\times \mathbb{R}^d_+)} = \mathcal{O}(\varepsilon)
\end{equation*}

\end{theorem}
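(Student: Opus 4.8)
The plan is to combine a WKB-type asymptotic expansion in powers of $\eps$ (\emph{without} any fast boundary-layer variable) with uniform energy estimates for the resulting remainder, carried out in anisotropic (conormal) Sobolev spaces adapted to the interface $x_d=0$. First I would fix an integer $M$, to be chosen large in terms of the target regularity index $s$, and seek an approximate solution of the form $\b v_\eps^{app}=\sum_{k=0}^{M}\eps^k \b v^k$, where each profile $\b v^k$ is smooth on each side of $\{x_d=0\}$, continuous across the interface, and vanishes for $t<0$. The content of the theorem --- that no boundary layer is created --- is precisely the claim that such an ansatz, with profiles depending only on the slow variables $(t,\b x)$, can be made consistent. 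So rather than introduce correctors in $x_d/\eps$ and then prove that they vanish, I would posit the slow expansion from the outset and verify that the cascade of equations it generates is solvable order by order.

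Plugging the ansatz into the penalized system and separating the regions $\chi=0$ and $\chi=1$ produces a cascade. On the physical side $x_d>0$ the penalty is absent: at order $\eps^0$ the profile $\b v^0$ solves the interior quasilinear system, and at order $\eps^k$ the profile $\b v^k$ solves the system linearized about $\b v^0$ with a source built from $\b v^0,\dots,\b v^{k-1}$. On the penalized side $x_d<0$ the term $\eps^{-1}\b P$ appears: the order $\eps^{-1}$ equation forces $\b P\b v^0=\b 0$, the order $\eps^{k-1}$ equation determines $\b P\b v^{k}$ from the lower-order profiles, while the complementary components $(\b I-\b P)\b v^k$ are propagated by a reduced evolution obtained by applying $\b I-\b P$. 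The two sides are coupled through continuity of $\b v^k$ across $x_d=0$, and the structural hypotheses enter decisively here: since the boundary is non-characteristic and the boundary conditions are maximally strictly dissipative, $\b A_d$ has exactly $N-p$ strictly negative and $p$ strictly positive eigenvalues, which is exactly the count needed so that the number of matching conditions is correct for each one-sided problem to be uniquely and stably solvable. Taking $k=0$, the relation $\b P\b v^0_{|x_d=0^-}=\b 0$ transmits to $\b P\b v^0_{|x_d=0^+}=\b 0$, so that $\b v^0_{|x_d>0}$ satisfies the reformulated problem \eqref{Reformulated_pb} and hence coincides with its unique solution $\b v$.

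Next I would establish existence, uniqueness and the stated regularity of $\b v_\eps$ itself. Because $\b A_0$ is symmetric positive definite, the $\b A_j$ are symmetric, and the penalty $\tfrac{\chi}{\eps}\b P$ is a symmetric \emph{nonnegative} zeroth-order term --- an orthogonal projector times a nonnegative coefficient --- the basic $L^2$ energy identity gains the good-sign contribution $\tfrac{\chi}{\eps}\|\b P\b v_\eps\|^2\ge 0$, yielding an $L^2$ estimate uniform in $\eps$. A standard iteration scheme for the quasilinear problem, together with Moser-type estimates, then produces a local-in-time solution, the $W^{1,\infty}$ bound that closes the nonlinear loop coming from Sobolev embedding in the anisotropic spaces. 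Since $\chi$ depends only on $x_d$, tangential and time derivatives commute with the penalty and are controlled globally, whereas the coefficient is smooth on each side, so interior hyperbolic regularity gives $\b v_{\eps}\in H^\infty$ separately on $\{x_d>0\}$ and $\{x_d<0\}$.

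Finally, setting $\b w_\eps=\b v_\eps-\b v_\eps^{app}$, the remainder solves the system linearized about $\b v_\eps^{app}$ with the same penalty and a source that is $O(\eps^{M})$ by construction. The crux is the energy estimate for $\b w_\eps$ in anisotropic Sobolev norms with careful tracking of the powers of $\eps$: tangential and time derivatives commute with $\chi/\eps$ at no cost, whereas each normal derivative that hits the penalty morally produces a $\delta$ at the interface, i.e.\ a factor $\eps^{-1}$, so the natural object is a norm carrying $\eps$-weights on the normal derivatives. Choosing $M=M(s)$ large enough absorbs these negative powers and gives $\|\b w_\eps\|_{H^s}=O(\eps)$; combined with $\b v^{app}_\eps-\b v=\eps\b v^1+\dots=O(\eps)$ on the physical side, the triangle inequality yields the claimed $\|\b v-\b v_\eps\|_{H^s(]-T_0,T[\times\RR^d_+)}=O(\eps)$. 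I expect the main obstacle to be exactly these uniform conormal energy estimates: one must commute high-order tangential and weighted-normal derivatives through both the quasilinear principal part and the singular penalty, control the commutators by Moser inequalities using the uniform $W^{1,\infty}$ bound, and check that the accumulated negative powers of $\eps$ are always dominated by the order of the constructed residual. The delicate bookkeeping of $\eps$-weights at the interface, rather than the algebra of the expansion, is where the real work lies.
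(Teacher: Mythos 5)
Your overall architecture --- a purely slow asymptotic expansion $\sum_k\varepsilon^k\b V^{k,\pm}$ matched across $x_d=0$, followed by conormal energy estimates for the remainder --- is the same as the paper's, and your description of the cascade (the $\varepsilon^{-1}$ equation forcing $\b P\b V^{0,-}=\b 0$, the reduced $(\b I-\b P)$ evolution in the penalized region, the eigenvalue count from maximal dissipativity making the matching well-posed) is accurate. The genuine gap is in your third paragraph, where you propose to establish existence, uniqueness and a uniform $W^{1,\infty}$ bound for $\b v_\varepsilon$ \emph{first}, by a ``standard iteration scheme for the quasilinear problem,'' and only afterwards estimate $\b w_\varepsilon=\b v_\varepsilon-\b v_\varepsilon^{app}$. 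For this singular problem that step does not go through as stated. The good sign of $\frac{\chi}{\varepsilon}\b P$ gives a uniform $L^2$ bound, but closing a quasilinear iteration requires uniform high-order estimates, and commuting tangential derivatives through the penalty produces commutators such as $\frac{1}{\varepsilon}\left[\chi\b A_d^{-1}\b P,\mathcal{T}^{\alpha}\right]$ that lose a factor $\varepsilon^{-1/2}$ even after the $\sqrt{\varepsilon}$-weighting; in the paper these losses are absorbed only because the quantity being estimated solves an equation whose source is $\mathcal{O}(\varepsilon^{M-1})$, so that the $L^\infty$ bootstrap of Lemma \ref{lemma_l_infty_estimate_w} closes via $\|\tilde{\b w}\|_{\infty}+\|\mathcal{T}\tilde{\b w}\|_{\infty}\leq\zeta_m(\lambda)\,\varepsilon^{M-\frac12 m_0-\frac52}$ with $M>3+\frac12 m_0$. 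Running the same estimates directly on $\b v_\varepsilon$, whose source $\b f$ is $\mathcal{O}(1)$, the Sobolev constant $e^{\lambda T}/(\lambda^{m-m_0}\sqrt{\varepsilon}^{\,m_0+1})$ blows up and neither the uniform lifespan $T$ nor the uniform $W^{1,\infty}$ bound follows. The paper's resolution is precisely to \emph{define} $\b v_\varepsilon:=\b v_a+\varepsilon\b w$ and obtain existence as a byproduct of the Picard iteration on the remainder equation, so that well-posedness and the error estimate are proved simultaneously rather than sequentially; your plan needs to be reorganized the same way.

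A secondary imprecision: you say each normal derivative hitting the penalty ``produces a $\delta$ at the interface, i.e.\ a factor $\varepsilon^{-1}$,'' and that the norms should carry $\varepsilon$-weights on the normal derivatives. Differentiating $\chi(x_d)$ in $x_d$ produces a genuine Dirac mass, not a power of $\varepsilon$, which is why the paper never commutes $\partial_d$ through the equation at all: the weighted norms $\|\cdot\|_{m,\lambda,\varepsilon}$ put $\sqrt{\varepsilon}^{|\alpha|}$ on the \emph{tangential} derivatives $\mathcal{T}^{\alpha}$, and the single normal derivative $\sqrt{\varepsilon}\,\partial_d\tilde{\b w}$ is recovered algebraically from the equation (using that $\b A_d$ is invertible, by the non-characteristic hypothesis) rather than by commutation; full normal regularity on each side of $\{x_d=0\}$ is then obtained a posteriori by the same device, inductively on the order of $\partial_d$. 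With these two corrections your outline matches the paper's proof.
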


Theorem \ref{Th_reform} provides a linear penalization for the reformulated problem. For the original problem (\ref{Original_problem}), the penalization becomes non linear.
Finally, for the hyperbolic problem in the original form, the theorem reads:

\begin{theorem}
Considering the assumption described above for the original problem 
\begin{equation}
\displaystyle
\left\{\begin{array}{ll} 
\partial_t \b u + \sum_{j=1}^{d}{ \bar{\b A}_j(\b u) \partial_j \b u} = \bar{\b f}(\b u) & \textsf{ in } ]-T_0,T[ \times \mathbb{R}^d_+ \\
\b \Theta(\b u)=\b 0 & x_d=0\\
\b u_{|t<0} = \b 0
\end{array}
\right.
\end{equation}
there exists a finite time $T\leq \theta$ and $\varepsilon_0>0$ such that, for all $\varepsilon \in ]0,\varepsilon_0]$, the penalised problem 
\begin{equation*}
\displaystyle
\left\{\begin{array}{ll} 
\begin{array}{l} \partial_t \b u_{\varepsilon} + \sum_{j=1}^{d}{ \bar{\b A}_j(\b u_{\varepsilon}) \partial_j \b u_{\varepsilon}} 
 + \dfrac{\chi(\b x)}{\varepsilon} \b M(\b u_{\varepsilon}) \, \b u_{\varepsilon} = \bar{\b f}(\b u_{\varepsilon})\end{array} & (t,\b x) \in ]-T_0,T[ \times \mathbb{R}^d \\
 \b u_{\varepsilon | t<0} = \b 0 & \\
\end{array}
\right.
\end{equation*} 
has a unique solution $\b u_{\varepsilon} \in H^1(]-T_0,T[\times \mathbb{R}^d_+) \cap  W^{1,\infty}(]-T_0,T[\times \mathbb{R}^d)$ which is smooth on each side of the boundary $x_d=0$: $\b u_{\varepsilon | x_d>0} \in H^{\infty}(]-T_0,T[\times \mathbb{R}^d_+) \textsf{ and }\b u_{\varepsilon | x_d<0} \in H^{\infty}(]-T_0,T[\times \mathbb{R}^d_-)$

where:
\begin{equation*}
\b M(\b u_{\varepsilon}) \, \b u_{\varepsilon} = \left(\b S(\b u_{\varepsilon})\right)^{-1} \left(\nabla_{\b v} \b H\left(\b H^{-1}(\b u_{\varepsilon})\right)^{\top}\right)^{-1} \b P \, \b H^{-1}(\b u_{\varepsilon})
\end{equation*}
Recall that, for $\b S, \b H$ and thus for $\b M$, the dependence on the function $\b a$ is implicit.

For all $s \in \mathbb{N}$, the penalization error estimate when $\varepsilon$ tends to $0$ is given by:
\begin{equation*}
\|\b u_{\varepsilon} - \b u\|_{H^s(]-T_0,T[ \times \mathbb{R}^d_+)} = \mathcal{O}(\varepsilon)
\end{equation*}

\end{theorem}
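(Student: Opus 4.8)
The plan is to deduce this statement from Theorem \ref{Th_reform} by pulling everything back through the change of unknown $\b H$ of Lemma \ref{lemma_ch_unknown}. The key observation is that the penalty matrix $\b M$ has been designed precisely so that, under the substitution $\b u_{\varepsilon} = \b H(\b v_{\varepsilon})$, the penalized original problem is carried exactly onto the penalized reformulated problem (\ref{Penalized_syst}), for which all conclusions are already available.

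First I would establish the equivalence of the two penalized problems. Starting from a solution $\b v_{\varepsilon}$ of (\ref{Penalized_syst}), I set $\b u_{\varepsilon} = \b H(\b v_{\varepsilon})$ and perform the same manipulations that led from (\ref{Original_problem}) to (\ref{Reformulated_pb}): substitute $\b u_{\varepsilon} = \b H(\b v_{\varepsilon})$ and left-multiply by $\nabla_{\b v} \b H(\b v_{\varepsilon})^{\top} \b S(\b H(\b v_{\varepsilon}))$. The differential part reproduces exactly the left-hand side of (\ref{Reformulated_pb}), so it only remains to check that the penalty terms agree. Using $\b H^{-1}(\b u_{\varepsilon}) = \b v_{\varepsilon}$ one computes
\begin{equation*}
\nabla_{\b v} \b H(\b v_{\varepsilon})^{\top} \b S(\b H(\b v_{\varepsilon})) \, \b M(\b u_{\varepsilon})\b u_{\varepsilon} = \nabla_{\b v} \b H(\b v_{\varepsilon})^{\top} \b S(\b H(\b v_{\varepsilon})) \, \b S(\b H(\b v_{\varepsilon}))^{-1} \bigl(\nabla_{\b v} \b H(\b v_{\varepsilon})^{\top}\bigr)^{-1} \b P \b v_{\varepsilon} = \b P \b v_{\varepsilon},
\end{equation*}
so the transformed penalty term is precisely $\tfrac{\chi}{\varepsilon}\b P \b v_{\varepsilon}$. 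Since $\b H(\b y,\cdot)$ is a $\mathcal{C}^\infty$-diffeomorphism for every $\b y \in \mathcal{Y}$, the map $\b v_{\varepsilon} \mapsto \b H(\b v_{\varepsilon})$ is a bijection between solutions valued in the relevant neighbourhood, and existence and uniqueness of $\b u_{\varepsilon}$ follow from the corresponding statement of Theorem \ref{Th_reform}.

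Next I would transfer the regularity. Since $\b H \in \mathcal{C}^\infty(\mathcal{Y}\times\mathcal{V},\mathcal{Q})$, $\b a \in H^\infty$ and $\b H(\b y,\b 0)=\b 0$, composition with $\b H$ preserves the functional framework: from $\b v_{\varepsilon} \in H^1 \cap W^{1,\infty}$ with $\b v_{\varepsilon|x_d>0},\, \b v_{\varepsilon|x_d<0} \in H^\infty$ one obtains the same statements for $\b u_{\varepsilon} = \b H(\b v_{\varepsilon})$, via the nonlinear (Moser-type) estimates for smooth functions of Sobolev arguments. Beforehand one must check that $\b v_{\varepsilon}$ stays valued in $\mathcal{V}$ globally so that $\b H(\b v_{\varepsilon})$ is defined; this follows from the uniform smallness bounds on $\b v_{\varepsilon}$ already established in the proof of Theorem \ref{Th_reform} for $\varepsilon \le \varepsilon_0$.

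Finally, for the error estimate I would write, on the physical domain $\mathbb{R}^d_+$ where both $\b v$ and $\b v_{\varepsilon}$ are $H^\infty$ and uniformly bounded in $W^{1,\infty}$,
\begin{equation*}
\b u - \b u_{\varepsilon} = \b H(\b v) - \b H(\b v_{\varepsilon}) = \biggl(\int_0^1 \nabla_{\b v}\b H\bigl(\b v_{\varepsilon} + \tau(\b v-\b v_{\varepsilon})\bigr)\,\d\tau\biggr)(\b v - \b v_{\varepsilon}).
\end{equation*}
Applying the tame product and composition estimates, the $H^s(\mathbb{R}^d_+)$ norm of the left-hand side is controlled by $\|\b v - \b v_{\varepsilon}\|_{H^s}$ times factors depending only on the uniformly bounded $W^{1,\infty}$ and $H^s$ norms of $\b v$ and $\b v_{\varepsilon}$. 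Since Theorem \ref{Th_reform} gives $\|\b v - \b v_{\varepsilon}\|_{H^s(]-T_0,T[\times\mathbb{R}^d_+)} = \mathcal{O}(\varepsilon)$, we conclude $\|\b u - \b u_{\varepsilon}\|_{H^s(]-T_0,T[\times\mathbb{R}^d_+)} = \mathcal{O}(\varepsilon)$. I expect the main obstacle to be exactly this last composition step: one must guarantee that the higher Sobolev norms of $\b v_{\varepsilon}$ stay bounded uniformly in $\varepsilon$, so that the Moser constants do not blow up as $\varepsilon \to 0$ — but this uniformity is precisely what the energy estimates underlying Theorem \ref{Th_reform} supply.
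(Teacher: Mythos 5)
Your proposal is correct and follows exactly the route the paper intends: the paper states this theorem as an immediate consequence of Theorem \ref{Th_reform} via the change of unknown $\b u_{\varepsilon}=\b H(\b v_{\varepsilon})$, leaving the deduction implicit, and your computation showing that $\nabla_{\b v}\b H^{\top}\b S$ carries $\tfrac{\chi}{\varepsilon}\b M(\b u_{\varepsilon})\b u_{\varepsilon}$ onto $\tfrac{\chi}{\varepsilon}\b P\b v_{\varepsilon}$ is precisely the point of the definition of $\b M$. The regularity transfer and the $\mathcal{O}(\varepsilon)$ estimate via composition/Moser bounds, with the uniform-in-$\varepsilon$ control supplied by the decomposition $\b v_{\varepsilon}=\b v_a+\varepsilon\b w$, correctly supply the details the paper omits.
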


The penalization matrix is non trivial and is of the form $\b M(\b u_{\varepsilon}) \, \b u_{\varepsilon}$, as 
\begin{equation*}
\left(\b S(\b 0)\right)^{-1} \left(\nabla_{\b v} \b H\left(\b H^{-1}(\b 0)\right)^{\top}\right)^{-1} \b P \, \b H^{-1}(\b 0) = \b 0
\end{equation*}
Observe that, if $p<n$, the penalization matrix is not invertible. Besides, remark that it is not always possible to have a human readable expression of $\b M$.

From the practical point of view, it is simpler to consider the reformulated problem (see theorem \ref{Th_reform}), as in this form, the penalization appears very natural.  Furthermore, even in the linear case, the construction of the penalty matrix $\b P$ is much simpler than the one proposed in the paper \cite{For09}.
The estimate $\|\b u_{\varepsilon} - \b u\|_{H^s} = \mathcal{O}(\varepsilon)$, can be interpreted as an absence of boundary layer for the penalty method described in this paper. This feature differs from the results known for quasilinear hyperbolic problem \cite{Khe08}.

To prove Theorem \ref{Th_reform}, we first build an approximate solution $\b v_a$ of the penalized problem (\ref{Reformulated_pb}) using a formal asymptotic expansion, as presented in the section \ref{sect_BKW}. The second step is to prove that the exact solution of (\ref{Reformulated_pb}) writes $\b v_a + \varepsilon \b w$ together with a good control of $\b w$. 
In order to show that $\b w$ remains bounded in a suitable Sobolev space, an iterative scheme is defined generating a sequence $(\b w^k)_{k\in \mathbb{N}}$. In the section \ref{sect_well_posed}, using energy estimates, we justify that $(\b w^k)$ is bounded for the $L^2$ and the $L^{\infty}$ norms and converges towards a function $\b w$.

The solution of the original problem $\b u$ is defined up to the time $\theta$ but, according to our theorem, the solution of the penalized problem might not be defined up to this time. Indeed, in the formal asymptotic expansion we were not able to prove the existence of the expansion in the penalized area up to the time $\theta$. This is in contrast with the case for the semilinear version of the penalization \cite{For09}.
 
In the following sections, we consider that the open subset $\Omega_T=]-T_0,T[ \times \mathbb{R}^d$. $\Omega_T^+=]-T_0,T[ \times \mathbb{R}^d_+$ represents the original domain (\emph{i.e.} the domain of the boundary value problem (\ref{Original_problem})) and $\Omega_T^-=]-T_0,T[ \times \mathbb{R}^d_-$ the penalized area (\emph{i.e.} the fictitious domain). Fornet and Gu\`es \cite{For09} presented a method to extend the results of Theorem \ref{Th_reform} for a more complicated original domain shape.

The sections \ref{sect_example_lin} and \ref{Example_appl} describe in a few lines two examples of application of this penalization method.

\section{The formal asymptotic expansion}\label{sect_BKW}

In order to build an approximate solution we look, at first, for a formal asymptotic expansion of the continuous solution of the form: 
\begin{equation*}
\b v_{\varepsilon}(t,x) \sim \left\{ \begin{array}{l} \sum_{n=0}^{+\infty}{ \varepsilon^n \b V^{n,-}(t,\b x)} \textsf{ if } x_d<0\\
\sum_{n=0}^{+\infty}{ \varepsilon^n \b V^{n,+}(t,\b x)} \textsf{ if } x_d>0 \end{array} \right.
\end{equation*}
where $\b V^{n,-}$ and $\b V^{n,+}$ satisfies the assumptions presented below:
\begin{itemize}
 \item $\b V^{n,-}_{|t<0}=\b 0$.
 \item $\b V^{n,+}_{|t<0}=\b 0$.
 \item For all $t>-T_0, \b V^{n,-}(t,x_1,\dots,x_{d-1},0)=\b V^{n,+}(t,x_1,\dots,x_{d-1},0)$.
\end{itemize}
We will build the $\b V^{n,-}$ and $\b V^{n,+}$ up to any order $n$. $\b V^{n,\pm}$ represents $\b V^{n,+}$ in the area $x_d>0$ and $\b V^{n,-}$ where $x_d<0$. As the series $\sum_{n}{ \varepsilon^n \b V^{n,\pm}(t,\b x)}$ does not converge in general, this is only a formal expansion and we use the character $\sim$ instead of $=$. The meaning of $\sim$ is in the sense of asymptotic expansions.

As, for all $j \in \{0,\dots,d\}$, $\b A_j(\b a (t,\b x), .)$ and $\b f$ are indefinitely differentiable, the following asymptotic expansions hold:
\begin{align*}
\b A_j\left(\b a(t,\b x), \b v_{\varepsilon}(t,\b x)\right)&\sim \sum_{n=0}^{+\infty} \varepsilon^n \b A_j^n\left(\b a(t,\b x), \b V^{0,\pm}(t,\b x),\dots, \b V^{n,\pm}(t,\b x)\right)\\
\b f\left(\b a (t,\b x), \b v_{\varepsilon}(t,\b x)\right)&\sim \sum_{n=0}^{+\infty} \varepsilon^n \b f^n\left(\b a(t,\b x), \b V^{0,\pm}(t,\b x),\dots, \b V^{n,\pm}(t,\b x)\right)\\
\end{align*}
Substituting the expansions in the system (\ref{Penalized_syst}) gives:
\begin{equation}\label{Penalized_expansion_syst}
\displaystyle
\dfrac{\chi}{\varepsilon} \b P \b V^{0,\pm} \!+\! \sum_{n=0}^{+\infty}{\varepsilon^n \! \left(\! \b A_0^0(\b V^{0,\pm}) \partial_t \b V^{n,\pm} \!+\! \sum_{j=1}^{d}{\b A_j^0(\b V^{0,\pm}) \partial_j \b V^{n,\pm} } \! + \! \b F^n(\b V^{0,\pm}\!,\b V^{k,\pm}\!, \partial \b V^{k-1,\pm}\!, 1 \! \leq \! k \! \leq \! n) \! + \! \chi \b P \b V^{n \! + \! 1} \! \right)} \!\! = \! \b 0
\end{equation}
Where $\b F^n(\b V^{0,\pm}\!,\b V^{k,\pm}\!, \partial \b V^{k-1,\pm}\!, 1 \! \leq \! k \! \leq \! n)$ contains all the remaining terms which depend on $\b V^{0,\pm}\!, \partial_j \b V^{0,\pm}\!, \dots,$ $\b V^{n-1,\pm}$, $\partial_j \b V^{n-1,\pm}$, $\b V^{n,\pm}$ but not on $\partial_j \b V^{n,\pm}$. Observe that the function\\ $\b V^{n,\pm} \mapsto \b F^n\left(\b V^{0,\pm}\!,\b V^{k,\pm}\!, \partial \b V^{k-1,\pm}\!, 1 \! \leq \! k \! \leq \! n\right)$ is affine.

\emph{Now, we consider the induction hypothesis:} $(\mathcal{H}^n):$  There exists a $T>0$ independent of $n$ such that for all $k \leq n, \b V^{k,+}$ and $\b V^{k,-}$  are well-defined on $]-T_0,T[ \times \mathbb{R}^d_+$ and $]-T_0,T[ \times \mathbb{R}^d_-$ (respectively). Besides $\b P \b V^{n+1,+}$ is well-defined on $]-T_0,T[\times \mathbb{R}^d_-$.

\emph{Proof of the initial assumption $(\mathcal{H}^0)$, studying the terms in $\varepsilon^0$:}

According to the term in $\varepsilon^{-1}$, we have $\b P \b V^{0,-} = 0$ (for all $x_d<0$).

\underline{For $x_d>0$ ($\chi(\b x)=0$):}

According to term in $\varepsilon^0$ of the equation (\ref{Penalized_expansion_syst}), $\b V^{0,+}$ satisfies the following hyperbolic system:
\begin{equation}\label{syst_order_0_+}
\left\{\begin{array}{l}
\b A_0^0(\b V^{0,+}) \, \partial_t \b V^{0,+} + \sum_{j=1}^{d}{\b A_j^0(\b V^{0,+}) \partial_j \b V^{0,+} } + \b F^0( \b V^{0,+}) = \b 0 \textsf{ in } ]-T_0,T[\times \mathbb{R}^d_+\\
\b P \b V^{0,+}_{\,|x_d=0}=\b P \b V^{0,-}_{\,|x_d=0}\\
\b V^{0,+}_{\,|t \in ]-T_0,0[} = \b 0
\end{array} \right.
\end{equation}
In fact, this hyperbolic system is exactly the boundary value problem (\ref{Reformulated_pb}), so it has maximally strictly dissipative and non characteristic boundary conditions.
So there exists a unique smooth solution, $\b V^{0,+}\in H^{\infty}(]-T_0,\theta[\times \mathbb{R}^d_+)$, of (\ref{syst_order_0_+}). Remark that, finally, $\b V^{0,+}$ equals to $\b v$, the solution of the reformulated hyperbolic problem (\ref{Reformulated_pb}).

\underline{For $x_d<0$ ($\chi(\b x)=1$):}
\begin{equation}\label{syst_order_0_-}
\left\{\begin{array}{l}
\b A_0^0(\b V^{0,-}) \, \partial_t \b V^{0,-} + \sum_{j=1}^{d}{\b A_j^0(\b V^{0,-}) \partial_j \b V^{0,-} } + \b F^0( \b V^{0,-})+\b P \b V^{1,-} = \b 0 \textsf{ in } ]-T_0,T[\times \mathbb{R}^d_-\\
\b V^{0,-}_{\,|x_d=0}=\b V^{0,+}_{\,|x_d=0}\\
\b V^{0,-}_{\,|t \in ]-T_0,0[} = \b 0
\end{array} \right.
\end{equation}
In order to obtain $\b V^{0,-}$, as $\b P \b V^{0,-}=\b 0$ has already been computed, we only need to construct $(\b I-\b P)\b V^{0,-}$ which is solution of:
\begin{equation}\label{syst_order_0_-_proj}
\left\{\begin{array}{l}
(\b I-\b P) \b A_0^0(\b V^{0,-}) (\b I-\b P) \, \partial_t \left( (\b I-\b P)\b V^{0,-}\right) + \sum_{j=1}^{d}{(\b I-\b P)\b A_j^0(\b V^{0,-}) (\b I-\b P) \partial_j \left( (\b I-\b P)\b V^{0,-}\right) } \\
\qquad + (\b I-\b P)\b F^0( \b V^{0,-})= \b 0 \textsf{ in } ]-T_0,T[\times \mathbb{R}^d_-\\
(\b I-\b P)\b V^{0,-}_{\,|x_d=0}=(\b I-\b P)\b V^{0,+}_{\,|x_d=0}\\
(\b I-\b P)\b V^{0,-}_{\,|t \in ]-T_0,0[} = \b 0
\end{array} \right.
\end{equation}
Let us write 
\begin{equation*}
 \left(\begin{array}{c} \b 0\\ \b V^{0,-}_{I\!I}\end{array}\right)=(\b I - \b P) \b V^{0,-} \qquad \text{, } \left(\begin{array}{c} \b 0\\ \b F^0_{I\!I}( \b V^{0,-})\end{array}\right) =  (\b I-\b P)\b F^0( \b V^{0,-})
\end{equation*}
and define the $N-p \times N-p$ matrices $\b A_{j}^{0,I\!I}(\b V^{0,-})$ such that
\begin{equation*}
 (\b I-\b P)\b A_j^0(\b V^{0,-}) (\b I-\b P)=\left(\begin{array}{c|c} \b 0 & \b 0\\ \hline \b 0 & \b A_{j}^{0, I\!I}(\b V^{0,-})\end{array}\right)
\end{equation*}
The problem (\ref{syst_order_0_-_proj}) can now be rewritten as a hyperbolic problem composed of $N-p$ equations (as its $p$ first components are null):
\begin{equation}\label{syst_order_0_-_proj_reduced}
\left\{\begin{array}{l}
\b A_{0}^{0,I\!I}(\b V^{0,-}) \, \partial_t \b V^{0,-}_{I\!I} + \sum_{j=1}^{d}{\b A_{j}^{0,I\!I}(\b V^{0,-}) \partial_j \b V^{0,-}_{I\!I} } + \b F^0_{I\!I}( \b V^{0,-})= \b 0 \textsf{ in } ]-T_0,T[\times \mathbb{R}^d_-\\
\b V^{0,-}_{I\!I\,|x_d=0}=\b V^{0,+}_{I\!I\,|x_d=0}\\
\b V^{0,-}_{I\!I\,|t \in ]-T_0,0[} = \b 0
\end{array} \right.
\end{equation}
The matrix $\b A_{0}^{0,I\!I}(\b V^{0,-})$ is symmetric positive definite, so do $\b A_{d}^{0,I\!I}(\b V^{0,-})$. To prove the well-posedness of the system (\ref{syst_order_0_-_proj_reduced}), we check that the boundary condition is maximally strictly dissipative:
\begin{align*}
\forall \b W_{I\!I} \in \mathbb{R}^{N-p},& \b W = \left(\begin{array}{c} \b 0\\ \b W_{I\!I}\end{array}\right) \in \mathrm{R}(\b I - \b P) = \ker \b P,\\
\langle \b A_{d}^{0,I\!I}(\b V^{0,-}_{\,|x_d=0}) \b W_{I\!I}, \b W_{I\!I}\rangle_{\mathbb{R}^{N-p}} & = \langle \b A_{d}^{0,I\!I}(\b V^{0,+}_{\,|x_d=0}) \b W_{I\!I}, \b W_{I\!I}\rangle_{\mathbb{R}^{N-p}}\\
		&= \langle \b A_{d}^{0}(\b V^{0,+}_{\,|x_d=0}) \underbrace{\b W}_{\in \ker \b P}, \b W\rangle_{\mathbb{R}^{N-p}}\\
		 & \leq - \mu \|\b W\|^2 = - \mu \|\b W_{I\!I}\|^2
\end{align*}
as the reformulated problem has maximally strictly dissipative boundary conditions.
Thus, the matrix $\b A_{d}^{0,I\!I}(\b V^{0,-}_{\,|x_d=0})$ is symmetric negative definite, which shows that $\mathcal{N}=\{\b 0\}\in \mathbb{R}^{N-p}$ is clearly the space of maximal dimension for which $\exists \mu_1>0, \forall \b W_{I\!I} \in \mathcal{N}, \langle -\b A_{d}^{0,I\!I}(\b V^{0,+}_{\,|x_d=0}) \b W_{I\!I}, \b W_{I\!I}\rangle_{\mathbb{R}^{N-p}} \leq - \mu_1 \| \b W_{I\!I}\|^2$.
Hence, the boundary conditions of the system (\ref{syst_order_0_-_proj_reduced}) are maximally strictly dissipative.

Hence, there exists $T\in ]0, \theta]$ such that there is a unique smooth solution, $ \b V^{0,-}_{I\!I}$, of (\ref{syst_order_0_-_proj_reduced}) defined on $]-T_0,T[\times \mathbb{R}^d_-$. Finally $(\b I - \b P) \b V^{0,-}$ and $\b V^{0,-}$ are built up to the time $T$. \emph{A priori}, it could happen that $T<\theta$.

Then, $\b P \b V^{1,-}$ is computed using:
\begin{equation*}
\b P \b V^{1,-}= - \b P \b A_0^0(\b V^{0,-}) \, \partial_t \b V^{0,-} - \sum_{j=1}^{d}{\b P \b A_j^0(\b V^{0,-}) \partial_j \b V^{0,-} } - \b P \b F^0( \b v^{0,-})
\end{equation*}
\emph{Proof of the induction hypothesis, using the terms in $\varepsilon^n$:} We assume that, for all $k\leq n-1$, $\b V^{k,-}, \b V^{k,+}$ and $\b P \b V^{n,-}$ are built.

\underline{For $x_d>0$ ($\chi(\b x)=0$):}
\begin{equation*}\label{syst_order_n_+}
\left\{\begin{array}{l}
\b A_0^0(\b V^{0,+}) \, \partial_t \b V^{n,+} + \sum_{j=1}^{d}{\b A_j^0(\b V^{0,+}) \partial_j \b V^{n,+} } + \b F^n(\b V^{0,+}\!,\b V^{k,+}\!, \partial \b V^{k-1,+}\!, 1 \! \leq \! k \! \leq \! n) =\b 0 \textsf{ in } ]-T_0,T[\times \mathbb{R}^d_+\\
\b P \b V^{n,+}_{\,|x_d=0}=\b P \b V^{n,-}_{\,|x_d=0}\\
\b V^{n,+}_{\,|t \in ]-T_0,0[} = \b 0
\end{array} \right.
\end{equation*}
As $\b F^n(\b V^{0,+}\!,\b V^{k,+}\!, \partial \b V^{k-1,+}\!, 1 \! \leq \! k \! \leq \! n)$ is affine for the variable $\b V^{n,+}$, the system (\ref{syst_order_n_+}) is a linear hyperbolic problem, and the homogeneous boundary condition version have maximally strictly dissipative boundary condition. So, the hyperbolic problem (\ref{syst_order_n_+}) admits a unique smooth solution $\b V^{n,+}$ up to the time $T$ introduced in the proof of $(\mathcal{H}^0)$ \cite{Ben07, Cha82}.

\underline{For $x_d<0$ ($\chi(\b x)=1$):}

Considering the terms at the order $n$ of (\ref{Penalized_expansion_syst}), $\b V^{n,-}$ satisfies:
\begin{equation}\label{syst_order_n_-}
\left\{\!\!\!\!\begin{array}{l}
\displaystyle \b A_0^0(\b V^{0,-}) \partial_t \b V^{n,-} \! + \! \sum_{j=1}^{d}{\b A_j^0(\b V^{0,-}) \partial_j \b V^{n,-} } \! + \! \b F^n(\b V^{0,-}\!,\b V^{k,-}\!, \partial \b V^{k-1,-}\!, 1 \! \leq \! k \! \leq \! n) \! + \! \b P \b V^{n+1,-} \! = \! \b 0 \textsf{ in } ]\! - \! T_0,T[\times \mathbb{R}^d_-\\
\b V^{n,-}_{\,|x_d=0}=\b V^{n,+}_{\,|x_d=0}\\
\b V^{n,-}_{\,|t \in ]-T_0,0[} = \b 0
\end{array} \right.
\end{equation}
Again, we only need to evaluate $(\b I-\b P)\b V^{n,-}$ to obtain $\b V^{n,-}$. So, we consider the $N-p$ last components, of the following linear system:
\begin{equation*}\label{syst_order_n_-_proj}
\left\{\begin{array}{l}
(\b I-\b P)\b A_0^0(\b V^{0,-})(\b I-\b P) \, \partial_t \left( (\b I-\b P)\b V^{n,-}\right) + \sum_{j=1}^{d}{(\b I-\b P)\b A_j^0(\b V^{0,-})(\b I-\b P) \partial_j \left( (\b I-\b P)\b V^{n,-}\right) } \\ 
\qquad + (\b I-\b P) \b F^n(\b V^{0,-}\!, \b V^{k,-}\!, \partial \b V^{k-1,-}\!, 1 \! \leq \! k \! \leq \! n) = \b 0 \textsf{ in } ]-T_0,T[\times \mathbb{R}^d_-\\
(\b I-\b P)\b V^{n,-}_{\,|x_d=0}=(\b I-\b P)\b V^{n,+}_{\,|x_d=0}\\
(\b I-\b P)\b V^{n,-}_{\,|t \in ]-T_0,0[} = \b 0
\end{array} \right.
\end{equation*}
As it has been done for the case $n=0$ (order $\varepsilon^0$) and $x_d<0$, the solution $(\b I-\b P)\b V^{n,-}$ is finally built up to the time $T$ defined in the proof of $(\mathcal{H}^0)$.

Now, we use the other part of the problem (\ref{syst_order_n_-}), \emph{i.e.} the $p$ first components, to have $\b P \b V^{n+1,-}$.

So $\mathcal{H}^{n}$ is proven and the asymptotic expansion can be built at any order.

The first term of the asymptotic expansion $\b V^{0,\pm}$ is the exact solution of the limit problem, when $\varepsilon$ tends to $0$. As the penalization is incomplete (\emph{i.e.}, the penalization matrix is not invertible), it is necessary to solve a hyperbolic problem in the penalized area ($x_d<0$) to compute $(\b I - \b P) \b V^{0,-}$.

Observe that, to build this asymptotic expansion up to any order, we do not need to introduce any variable of the form $x_d/\varepsilon^b$ (with $b \neq 0$). This is not the case in the paper \cite{For09} (theorem 2.6) where the asymptotic expansion terms are in $\b V^n(t,\b x, x_d/\varepsilon)$. A boundary layer due to a $L^2$ penalty method has also been exhibited thanks to a BKW asymptotic expansion in a paper of Carbou \cite{Car03} for some Brinkmann-type penalization model for viscous flows.
The boundary layer ensures a continuous connection when the conditions at the boundary of the original domain (here, $\mathbb{R}^d_+$) and of the penalized domain are not compatible, which is not the case in our approach. 

\section{Well-posedness and penalization error estimate}\label{sect_well_posed}
The asymptotic expansion built in the previous section may not be the solution of the penalized problem (\ref{Penalized_syst}), it is only a formal expression. But, the first terms (up to an order $M$) will be useful to find the solution of (\ref{Penalized_syst}).

\subsection{Definitions and notations}

We recall the penalized hyperbolic problem considered:
\begin{equation}\left\{\begin{array}{l}
 A_0(\b v_{\varepsilon})\partial_t \b v_{\varepsilon} + \sum_{j=1}^{d} \b A_j(\b v_{\varepsilon}) \partial_j \b v_{\varepsilon} + \frac{1}{\varepsilon} \chi \b P \b v_{\varepsilon} = \b f \quad (t,\b x) \in ]- T_0,T[ \times \mathbb{R}^d\\
 \b v_{\varepsilon |t<0}=\b 0
 \end{array}\right.
 \label{Syst_non_lin_pen}
\end{equation}

In the previous section, we have built an approximate solution $\b v_a(t,\b x)=\sum_{n=0}^{M}{\varepsilon^n \b V^{n,\pm}(t,\b x)}$ (with $M$ large enough) such that
\begin{equation}
\label{Syst_non_lin_pen_app}
\left\{\begin{array}{l}
\b A_0 (\b v_a) \partial_t \b v_a + \sum_{j=1}^{d} \b A_j(\b v_a) \partial_j \b v_a + \frac{1}{\varepsilon} \chi \b P \b v_a = \varepsilon^{M} \b R_{\varepsilon} + \b f \quad (t,\b x) \in ]- T_0,T[ \times \mathbb{R}^d\\
\b v_{a \, |t<0}=\b 0 
 \end{array}\right.
\end{equation}

The proof of Theorem \ref{Th_reform} uses the $L^\infty$ norm of $\b R_{\varepsilon}$ which is bounded independently from $\varepsilon$. This this the object of the lemma below:
\begin{lemma}\label{lemma_R_e}
For some $M \in \mathbb{N}^*$ and $\varepsilon_0>0$, the function $\b v_a=\sum_{n=0}^{M}{\varepsilon^n \b V^{n,\pm}}$ is a solution of this approximate problem
\begin{equation*}
\left\{\begin{array}{l}
\b A_0 (\b v_a) \partial_t \b v_a + \sum_{j=1}^{d} \b A_j(\b v_a) \partial_j \b v_a + \frac{1}{\varepsilon} \chi \b P \b v_a = \varepsilon^{M} \b R_{\varepsilon} + \b f \quad (t,\b x) \in ]- T_0,T[ \times \mathbb{R}^d\\
\b v_{a \, |t<0}=\b 0 
 \end{array}\right.
\end{equation*}
and $\|\b R_{\varepsilon}\|_{\infty}$ is bounded uniformly in $\varepsilon \in ]0,\varepsilon_0]$.
\end{lemma}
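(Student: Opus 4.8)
The key observation is that $\b v_a$ was constructed precisely so that when we substitute it into the penalized operator, all terms in the asymptotic expansion through order $\varepsilon^M$ cancel by design, leaving only a remainder built from the truncation. So the plan is first to define $\b R_\varepsilon$ explicitly as $\varepsilon^{-M}$ times the discrepancy, i.e.
\begin{equation*}
\varepsilon^M \b R_\varepsilon = \b A_0(\b v_a)\,\partial_t \b v_a + \sum_{j=1}^d \b A_j(\b v_a)\,\partial_j \b v_a + \tfrac{\chi}{\varepsilon}\b P \b v_a - \b f(\b v_a),
\end{equation*}
and then to show this quantity is $\mathcal{O}(\varepsilon^M)$ in $L^\infty$ by tracking where the cancellations come from. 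I would treat the two regions $x_d>0$ and $x_d<0$ separately, since the penalization term $\tfrac{\chi}{\varepsilon}\b P \b v_a$ is active only on the penalized side.

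First I would handle the quasilinear composition. Because $\b v_a = \sum_{n=0}^M \varepsilon^n \b V^{n,\pm}$ is a \emph{finite} sum of smooth functions (each $\b V^{n,\pm}$ lies in $H^\infty$ on its side up to time $T$ by the construction of the previous section), and since each $\b A_j$ and $\b f$ is $\mathcal{C}^\infty$ with $\b v_a$ ranging in a fixed compact neighbourhood of $\b 0$ for $\varepsilon\le\varepsilon_0$ small, the exact compositions $\b A_j(\b v_a)$ and $\b f(\b v_a)$ admit finite Taylor expansions in $\varepsilon$ whose coefficients are the $\b A_j^n, \b f^n$ from the formal expansion, plus a Taylor remainder that is $\mathcal{O}(\varepsilon^{M+1})$ uniformly (again by smoothness on the compact range and boundedness of all the $\b V^{n,\pm}$ and their derivatives). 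Substituting these finite expansions and collecting powers of $\varepsilon$, every coefficient of $\varepsilon^n$ for $0\le n\le M-1$ on the side $x_d>0$, and the analogous coefficients on $x_d<0$ including the penalization contribution $\chi\b P\b V^{n+1,\pm}$, vanishes exactly because $\b V^{n,\pm}$ was \emph{defined} to solve the systems (\ref{syst_order_0_+})--(\ref{syst_order_n_-}). What survives is collected into $\varepsilon^M \b R_\varepsilon$: the high-order truncation terms (products of $\b A_j^n$ with $\partial_j \b V^{m,\pm}$ where $n+m\ge M$), together with the Taylor remainders from the compositions.

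The main obstacle, and the reason the lemma needs care, is the penalization term $\tfrac{\chi}{\varepsilon}\b P \b v_a$, which carries a factor $\varepsilon^{-1}$ and could a priori destroy the estimate. The point to verify is that $\b P \b v_a = \sum_{n=0}^M \varepsilon^n \b P \b V^{n,\pm}$ combines with the order-$n$ equations so that the $\varepsilon^{-1}$ is always matched: at order $\varepsilon^{-1}$ one uses $\b P\b V^{0,-}=\b 0$, and at each subsequent order the term $\chi\b P\b V^{n+1,\pm}$ produced by the expansion is exactly the quantity defined through the $p$ first components of (\ref{syst_order_n_-}). Thus the only genuinely uncancelled penalization contribution is $\tfrac{\chi}{\varepsilon}\b P(\varepsilon^M \b V^{M,\pm})=\varepsilon^{M-1}\chi\b P\b V^{M,\pm}$ together with the top-order coupling to $\b P\b V^{M+1}$, which is why one must take $M\ge 1$ and why $\b R_\varepsilon$ ends up of the stated order. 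Once all contributions are identified as finite sums of products of smooth, $\varepsilon$-uniformly bounded functions multiplied by a nonnegative power of $\varepsilon$, the uniform $L^\infty$ bound on $\|\b R_\varepsilon\|_\infty$ for $\varepsilon\in]0,\varepsilon_0]$ follows immediately from the $H^\infty\hookrightarrow L^\infty$ boundedness of each $\b V^{n,\pm}$ and its first derivatives on the compact time interval $[-T_0,T]$.
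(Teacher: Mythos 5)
Your proposal follows essentially the same route as the paper: define $\varepsilon^M \b R_\varepsilon$ as the residual of the penalized operator applied to $\b v_a$, expand the compositions $\b A_j(\b v_a)$ and $\b f(\b v_a)$ in powers of $\varepsilon$, cancel every order through $\varepsilon^M$ using the defining equations of the profiles $\b V^{n,\pm}$, and bound the surviving high-order products together with $\chi \b P \b V^{M+1,\pm}$ in $L^\infty$ via the $H^\infty$ regularity of the $\b V^{n,\pm}$ on $[-T_0,T]$. Your handling of the quasilinear composition (finite Taylor expansion plus a uniform $\mathcal{O}(\varepsilon^{M+1})$ remainder) is if anything slightly more careful than the paper, which writes the truncated expansions as exact equalities.

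One sentence in your bookkeeping of the penalization term should be corrected. You assert that $\tfrac{\chi}{\varepsilon}\b P(\varepsilon^M \b V^{M,\pm}) = \varepsilon^{M-1}\chi\b P\b V^{M,\pm}$ is a ``genuinely uncancelled'' contribution. It is not: $\chi\b P\b V^{M,-}$ is exactly the quantity produced by the $p$ first components of the order-$(M-1)$ equation (\ref{syst_order_n_-}) with $n=M-1$, so it cancels against the coefficient of $\varepsilon^{M-1}$, precisely by the mechanism you yourself describe for the lower orders. If that term really survived, $\b R_\varepsilon$ would contain $\varepsilon^{-1}\chi\b P\b V^{M,\pm}$ and would fail to be uniformly bounded, contradicting the conclusion of the lemma. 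The only genuinely surviving penalization contribution is $-\varepsilon^{M}\chi\b P\b V^{M+1,\pm}$, arising because $\b V^{M+1,\pm}$ is absent from the truncated sum $\b v_a$; after division by $\varepsilon^M$ this gives an $\mathcal{O}(1)$ term in $\b R_\varepsilon$, which is exactly what the paper's computation exhibits. With that correction your argument is complete and matches the paper's proof.
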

\begin{proof}[Proof of the lemma \ref{lemma_R_e}:]
As the asymptotic expansion has a finite order, we can consider:
\begin{align*}
\b A_j\left(\b a(t,\b x), \b v_{a}(t,\b x)\right)& = \sum_{n=0}^{M} \varepsilon^n \b A_j^n\left(\b a(t,\b x), \b V^{0,\pm}(t,\b x),\dots, \b V^{n,\pm}(t,\b x)\right)\\
\b f\left(\b a(t,\b x), \b v_{a}(t,\b x)\right)& = \sum_{n=0}^{M} \varepsilon^n \b f^n\left(\b a(t,\b x), \b V^{0,\pm}(t,\b x),\dots, \b V^{n,\pm}(t,\b x)\right)
\end{align*}
The corrective term $\b R_{\varepsilon}$ satisfies:
\begin{align*}
\varepsilon^{M} \b R_{\varepsilon} & = \b A_0 (\b v_a) \partial_t \b v_a + \sum_{j=1}^{d} \b A_j(\b v_a) \partial_j \b v_a + \frac{1}{\varepsilon} \chi \b P \b v_a -\b f\\
 & = \sum_{j=0}^{d}{\sum_{n=0}^{M} \varepsilon^n \b A_j^n\left(\b V^{0,\pm},\dots, \b V^{n,\pm}\right)\sum_{p=0}^{M}{\varepsilon^p \partial_j \b V^{n,\pm}}} + \sum_{n=0}^{M}\varepsilon^{n-1} \chi \b P \b V^{n,\pm} - \sum_{n=0}^{M} \varepsilon^n \b f^n\left(\b V^{0,\pm},\dots, \b V^{n,\pm}\right)
\end{align*}
According to the definition of the terms $\b V^{n,\pm}$, the equation above reads:
\begin{align*}
\varepsilon^{M} \b R_{\varepsilon} & = \sum_{j=0}^{d}{\sum_{n=M+1}^{2M} \varepsilon^n \sum_{p=n-M}^{M} \b A_j^p\left(\b V^{0,\pm},\dots, \b V^{p,\pm}\right)\partial_j \b V^{n-p,\pm}} - \varepsilon^{M} \chi \b P \b V^{M+1,\pm}
\end{align*}
This is a sum of terms in $H^{\infty}(\Omega_T)$. It follows that there exists a constant $c>0$, independent of $\varepsilon \in ]0,\varepsilon_0]$, such that:
\begin{equation*}
\|\b R_{\varepsilon}\|_{\infty} \leq c \varepsilon^M
\end{equation*}
\end{proof}
For the rest of the proof of Theorem \ref{Th_reform}, we choose $m \geq m_0=\lfloor\frac{d}{2}\rfloor+2$ and $M>3+\frac12 m_0$.

\begin{definition}[Tangential derivatives] Consider $\alpha = (\alpha_0, \dots, \alpha_{d-1}) \in \mathbb{N}^d$, the tangential derivatives operator $\mathcal{T}^{\alpha}$ is defined by $\mathcal{T}^{\alpha}=\partial^{\alpha_0}_{t} \partial^{\alpha_1}_{x_1} \dots \partial^{\alpha_{d-1}}_{x_{d-1}}$. 
\end{definition}

We also define an \emph{ad-hoc} functional space:
\begin{definition}\label{A-space}
We define the space $\mathcal{A}(\Omega_T)$ which is the set of functions $\b v : \Omega_T \to \mathbb{R}^N$ such that:
\begin{itemize}
 \item $\b v \in H^1(\Omega_T)$.
 \item $\b v \in H^{\infty}_{tan}(\Omega_T)$, \emph{i.e.} for all $\alpha \in \mathbb{N}^d, \mathcal{T}^{\alpha} \b v \in L^2(\Omega)$.
 \item $\partial_d \b v \in H^{\infty}_{tan}(\Omega_T)$.
 \item $\b v \in W^{1,\infty}$ which means $\b v \in L^{\infty}$, and, for all $j \in \{0,\dots,d\}, \partial_j \b v \in L^{\infty}$
\end{itemize}
\end{definition}

Now, the objective is to find $\b w \in \mathcal{A}(\Omega_T)$ such as $\b v_{\varepsilon} = \b v_a + \varepsilon \b w$ is a solution of the penalised problem (\ref{Penalized_syst}), \emph{i.e.}:
\begin{equation}
 \label{Syst_non_lin_pen_th}
 \left\{\begin{array}{l}
  \b A_0 (\b v_a + \varepsilon \b w) \partial_t (\b v_a + \varepsilon \b w) + \sum_{j=1}^{d} \b A_j(\b v_a + \varepsilon \b w) \partial_j (\b v_a + \varepsilon \b w) + \frac{1}{\varepsilon} \chi \b P (\b v_a + \varepsilon \b w) = \b f \quad (t,\b x) \in ]- T_0,T[ \times \mathbb{R}^d\\
  \b v_{a \, |t<0} + \varepsilon \b w_{|t<0} =\b 0
 \end{array}\right.
\end{equation}

Taking the difference between (\ref{Syst_non_lin_pen_th}) and (\ref{Syst_non_lin_pen_app}):
\begin{equation*}
\left\{\begin{array}{l}
\sum_{j=0}^{d} \left( \b A_j(\b v_a + \varepsilon \b w) \partial_j (\b v_a + \varepsilon \b w) - \b A_j(\b v_a) \partial_j \b v_a \right) + \frac{1}{\varepsilon} \chi \b P \varepsilon \b w = - \varepsilon^{M} \b R_{\varepsilon} \quad (t,\b x) \in ]- T_0,T[ \times \mathbb{R}^d\\
 \b w_{|t<0}=\b 0 
\end{array}\right.
\end{equation*}

We define the linear map $\b B(\b U, \b V, \varepsilon \b w) : \b W \mapsto \b B(\b U, \b V, \varepsilon \b w) \b W$ such that:
\begin{equation*}
 \sum_{j=0}^{d}{\left(\b A_j(\b v_a + \varepsilon \b w) - \b A_j(\b v_a) \right)\partial_j \b v_a} = - \varepsilon \b B(\b v_a, \nabla \b v_a,\varepsilon \b w) \b w
\end{equation*}
 This operator depends on $(\b a, \b v_a, \nabla \b v_a, \b w)$ but it is of class $\mathcal{C}^{\infty}$ for all its variables. In order to avoid too long equations, we will not write the variables $\b v_a, \nabla \b v_a$ in the operator $\b B(\b v_a, \nabla \b v_a,\varepsilon \b w)$, which now becomes, $\b B(\varepsilon \b w)$.

Hence the hyperbolic problem for $\b w$ reads:
\begin{equation*}\label{Syst_non_lin_pen_w}
\left\{\begin{array}{l}
\b A_0(\b v_a + \varepsilon \b w) \partial_t \b w + \sum_{j=1}^{d}{ \b A_j(\b v_a + \varepsilon \b w) \partial_j \b w } - \b B(\varepsilon \b w) \b w + \frac{1}{\varepsilon} \chi \b P \b w = - \varepsilon^{M-1} \b R_{\varepsilon} \quad (t,\b x) \in ]- T_0,T[ \times \mathbb{R}^d\\
 \b w_{|t<0}=\b 0 
\end{array}\right.
\end{equation*}

We now consider a Picard's iterative scheme:
\begin{align*}\displaystyle
& \b w^0=\b 0\\
& \forall k \in \mathbb{N},\\
& \left\{\begin{array}{l}
\! \! \! \b A_0(\b v_a \! + \! \varepsilon \b w^k) \partial_t \b w^{k+1} \! + \! \sum_{j=1}^{d}{ \b A_j(\b v_a \! + \! \varepsilon \b w^k) \partial_j \b w^{k+1} } \! - \! \b B(\varepsilon \b w^k) \b w^{k+1} \! + \! \dfrac{\chi}{\varepsilon} \b P \b w^{k+1} \! = \! - \varepsilon^{M-1} \b R_{\varepsilon} \textsf{ in } ] \! -  \! T_0,T[ \times \mathbb{R}^d\\
\! \! \! \b w^{k+1}_{|t<0} \! = \! \b 0 
\end{array}\right.
\end{align*}
This sequence is expected to converge towards $\b w$ in $L^2(\Omega_T)$ and then in $H^{\infty}(\Omega_T^+)$, in $H^{\infty}(\Omega_T^-)$ and in $H^{1}(\Omega_T)$.

\subsection{Proof of the convergence of the sequence $(\b w^{k})$ in $L^2$}

\subsubsection{Weighted norms:}
To prove that the sequence $(\b w^{k})$ converges in $L^2(\Omega_T)$, we will use energy estimates. But the $L^2$ norm is not really practical to obtain estimates which are bounded when $\varepsilon$ goes to $0$. That is why the weighted norms presented below are used in this paper:

\begin{definition}[Weighted norms]
\begin{align*}
 \forall \b \Phi \in L^2(\Omega_T), \| \b \Phi \|_{0,\lambda} & = \| e^{- \lambda t} \b \Phi \|_{L^2(\Omega_T)}\\
 \forall \b \Phi \in H^m_{tan}(\Omega_T), \| \b \Phi \|_{m, \lambda} & = \sum_{\vert \alpha \vert \leq m}{\lambda^{m-\vert \alpha \vert}\| \mathcal{T}^{\alpha} \b \Phi \|_{0,\lambda} }\\
 \forall \b \Phi \in H^m_{tan}(\Omega_T), \| \b \Phi \|_{m, \lambda, \varepsilon} & = \sum_{\vert \alpha \vert \leq m}{\lambda^{m-\vert \alpha \vert}\| \sqrt{\varepsilon}^{\vert \alpha \vert} \mathcal{T}^{\alpha} \b \Phi \|_{0,\lambda} }
\end{align*}
\end{definition}

Let us note that:
\begin{enumerate}
 \item $\|.\|_{0,\lambda}$ is equivalent to the norm $\|.\|_{L^2(\Omega_T)}$ (for a fixed value of $\lambda$).
 \item $\|.\|_{m,\lambda}$ and $\|.\|_{m,\lambda,\varepsilon}$ are equivalent to the norm $\|.\|_{H^m_{tan}(\Omega_T)}=\sum_{\vert \alpha \vert \leq m}{\|  \mathcal{T}^{\alpha} . \|_{L^2(\Omega_T)} }$ (for fixed values of $\lambda$ and $\varepsilon$).
 \item We also have ($\b \Phi \in H^m_{tan}(\Omega_T)$):
 \begin{align*}
 \lambda \| \b \Phi \|_{m-1, \lambda, \varepsilon} & \leq \| \b \Phi \|_{m, \lambda, \varepsilon}\\
 \sqrt{\varepsilon} \| \mathcal{T} \b \Phi \|_{m-1, \lambda, \varepsilon} & \leq \| \b \Phi \|_{m, \lambda, \varepsilon}
\end{align*}
\end{enumerate}

It is easy to prove that there exist $c>0$ (depending on $m$, but not on $\lambda, \varepsilon$) and a function $\zeta_m(\lambda)$ (independent of $\varepsilon$) such that
\begin{align*}
 \|\b R_{\varepsilon} \|_{m, \lambda, \varepsilon} &\leq \zeta_m(\lambda)\\
 \|\b R_{\varepsilon}\|_{\infty} &\leq c
\end{align*}
Since $\b R_{\varepsilon |t<0}=\b 0$, observe that $\zeta_m(\lambda)=\mathcal{O}(\lambda^m)$, when $\lambda$ tends to the infinity.

\subsubsection{Energy estimates for $\b w^{k+1}$:}

The goal of this subsection is to provide energy estimates for the following hyperbolic problem of unknown $\tilde{\b w}$: 
\begin{equation}
\label{PDE_energ_estim}
\displaystyle
\left\{\begin{array}{l}
\b A_0(\b v_a + \varepsilon \b b) \partial_t \tilde{\b w} + \sum_{j=1}^{d}{ \b A_j(\b v_a + \varepsilon \b b) \partial_j \tilde{\b w} } - \b B(\varepsilon \b b) \tilde{\b w} + \frac{1}{\varepsilon} \chi \b P \tilde{\b w} = \b g \quad (t,\b x) \in \Omega_T\\
 \tilde{\b w}_{|t<0}=\b 0 
\end{array}\right.
\end{equation}
In this subsection, $\b b$ represents $\b w^k$ (for some $k \in \mathbb{N}$) and $\tilde{\b w}$ stands for $\b w^{k+1}$. In our estimates, the constants must not depend on 
$\b b$, $\tilde{\b w}$ (\emph{i.e.} $\b w^k$, $\b w^{k+1}$) to ensure that we can prove by induction the boundedness of the sequence $(\b w^k)_{k \in \mathbb{N}}$ (for $\|.\|_{m,\lambda, \varepsilon}$ and $\|.\|_{\infty}+\|\nabla .\|_{\infty}$). $\b g$ represents $\varepsilon^{M-1} R_{\varepsilon}$, so we assume $\|\b g\|_{m,\lambda, \varepsilon} \leq \zeta_m(\lambda) \varepsilon^{M-1}$ and $\|\b g\|_{\infty}\leq c \varepsilon^{M-1}$.

\begin{proposition}\label{1st_energy_estim}
We assume that:
\begin{itemize}
 \item $\b b \in \mathcal{A}\left( ]-T_0,T[\times \mathbb{R}^d \right)$.
 \item $\| \b b \|_{\infty} + \| \nabla \b b \|_{\infty} \leq R$.
\end{itemize}
There exists $\varepsilon_1(R)\in]0,1[$  such that the hyperbolic problem (\ref{PDE_energ_estim}) admits a solution $\tilde{\b w} \in \mathcal{A}(]-T_0,T[\times \mathbb{R}^d)$ for all $\varepsilon\leq \varepsilon_1(R)$. Besides, there exists $C(R)$ (which does not depends on $\b g$) and $\lambda_0(R)>0$ such that the following energy estimates holds:
\begin{equation*}
\forall \lambda > \lambda_0(R), \sqrt{\lambda} \| \tilde{\b w} \|_{0,\lambda} + \frac{1}{\sqrt{\varepsilon}} \|\chi \b P \tilde{\b w}\|_{0,\lambda} \leq \frac{C(R)}{\sqrt{\lambda}} \|\b g\|_{0,\lambda}
\end{equation*}
\end{proposition}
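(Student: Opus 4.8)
The plan is to establish the a priori weighted estimate first, then recover existence by duality, and finally bootstrap the regularity needed to place $\tilde{\b w}$ in $\mathcal{A}(\Omega_T)$. As a preliminary step I would fix $\varepsilon_1(R)\in]0,1[$ small enough that $\b v_a+\varepsilon\b b$ stays inside the neighbourhood $\mathcal{V}$ on which the symmetrizer structure of Section~\ref{sect_main_result} is valid: using $\|\b b\|_{\infty}\leq R$ and $\varepsilon\leq\varepsilon_1(R)$, this guarantees that $\b A_0(\b v_a+\varepsilon\b b)$ is uniformly positive definite, $\langle\b A_0\b W,\b W\rangle\geq e\|\b W\|^2$, and that each $\b A_j(\b v_a+\varepsilon\b b)$ is symmetric with $\b A_d$ invertible by the non-characteristic hypothesis.

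For the estimate itself I would work with $\b \Psi=e^{-\lambda t}\tilde{\b w}$, so that $\|\b \Psi\|_{L^2(\Omega_T)}=\|\tilde{\b w}\|_{0,\lambda}$ and $\b \Psi$ solves the same system with an extra term $\lambda\b A_0\b \Psi$ and right-hand side $e^{-\lambda t}\b g$. Taking the $L^2(\Omega_T)$ scalar product with $\b \Psi$ and integrating, the symmetry of $\b A_0,\dots,\b A_d$ lets me write each $\langle\b A_j\partial_j\b \Psi,\b \Psi\rangle$ as $\tfrac12\partial_j\langle\b A_j\b \Psi,\b \Psi\rangle-\tfrac12\langle(\partial_j\b A_j)\b \Psi,\b \Psi\rangle$. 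Because the problem is posed on the whole of $\mathbb{R}^d$ and both $\b A_d(\b v_a+\varepsilon\b b)$ and $\b \Psi$ are continuous across $x_d=0$ (the expansion terms match on the interface and $\tilde{\b w}\in H^1$), the divergence terms integrate to zero: there is neither an exterior boundary term nor an interface jump, only the favourable time slice $\tfrac12\int_{\mathbb{R}^d}\langle\b A_0\b \Psi,\b \Psi\rangle_{|t=T}\geq0$, which I simply drop. The three surviving contributions are controlled uniformly in $\varepsilon$: the penalty gives $\tfrac1\varepsilon\int\chi\langle\b P\b \Psi,\b \Psi\rangle=\tfrac1\varepsilon\|\chi\b P\tilde{\b w}\|_{0,\lambda}^2\geq0$ (since $\b P$ is an orthogonal projector and $\chi^2=\chi$), so it sits on the good side; the zeroth-order terms $\tfrac12(\partial_t\b A_0+\sum_j\partial_j\b A_j)$ and $\b B(\varepsilon\b b)$ are bounded in $L^\infty$ by a single $C_3(R)$, the crucial point being that their dependence on $\b b$ enters only through $\b v_a+\varepsilon\b b$ and $\nabla\b v_a+\varepsilon\nabla\b b$, so $\|\b b\|_{\infty}+\|\nabla\b b\|_{\infty}\leq R$ and $\varepsilon\leq1$ bound them independently of $\varepsilon$ and of the iterate; and the source is handled by Cauchy--Schwarz. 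This yields $\lambda e\|\tilde{\b w}\|_{0,\lambda}^2+\tfrac1\varepsilon\|\chi\b P\tilde{\b w}\|_{0,\lambda}^2\leq\|\b g\|_{0,\lambda}\|\tilde{\b w}\|_{0,\lambda}+C_3(R)\|\tilde{\b w}\|_{0,\lambda}^2$. Choosing $\lambda_0(R)$ so that $\lambda e-C_3(R)\geq\tfrac12\lambda e$ for $\lambda>\lambda_0(R)$ absorbs the bad term, and the remaining inequality $\tfrac12\lambda e\,a^2+\tfrac1\varepsilon b^2\leq g\,a$ (with $a=\|\tilde{\b w}\|_{0,\lambda}$, $b=\|\chi\b P\tilde{\b w}\|_{0,\lambda}$, $g=\|\b g\|_{0,\lambda}$) gives first $a\leq\tfrac{2}{\lambda e}g$ and then $\tfrac1\varepsilon b^2\leq g\,a\leq\tfrac{2}{\lambda e}g^2$; combining the two produces $\sqrt{\lambda}\,a+\tfrac1{\sqrt{\varepsilon}}b\leq\tfrac{C(R)}{\sqrt{\lambda}}g$, which is exactly the claimed estimate.

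For existence, the operator is a Friedrichs symmetric system ($\b A_0$ positive definite, $\b A_j$ symmetric) perturbed by the bounded zeroth-order terms and by the symmetric nonnegative penalty $\tfrac{\chi}{\varepsilon}\b P$; since the analogous energy estimate applies to the backward adjoint problem, the classical duality argument for symmetric hyperbolic Cauchy problems with data vanishing in the past (\emph{cf.} \cite{Ben07,Cha82}) produces a weak solution $\tilde{\b w}\in L^2$, which is unique by the estimate. To promote it into $\mathcal{A}(\Omega_T)$ I would apply the tangential operators $\mathcal{T}^{\alpha}$ to the equation: they commute with $\b P$ and with $\chi$ and generate only commutator terms of the same controlled type, so repeating the energy estimate gives $\mathcal{T}^{\alpha}\tilde{\b w}\in L^2$ for every $\alpha$, i.e.\ $\tilde{\b w}\in H^\infty_{tan}(\Omega_T)$. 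The normal derivative is then recovered algebraically from the equation, $\b A_d\,\partial_d\tilde{\b w}=\b g+\b B(\varepsilon\b b)\tilde{\b w}-\b A_0\partial_t\tilde{\b w}-\sum_{j=1}^{d-1}\b A_j\partial_j\tilde{\b w}-\tfrac{\chi}{\varepsilon}\b P\tilde{\b w}$, using the invertibility of $\b A_d$; this yields $\partial_d\tilde{\b w}\in H^\infty_{tan}$ on each side of $x_d=0$ and hence $\tilde{\b w}\in H^1(\Omega_T)$ together with smoothness on each side, while $W^{1,\infty}$ follows from the anisotropic Sobolev embedding once $m\geq m_0$.

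The main obstacle is making \emph{every} constant depend only on $R$, never on $\varepsilon$ nor on the iterate $\b b=\b w^k$: this is precisely what will allow the induction in the next subsection to close. It rests entirely on the favourable sign of the penalty, so that $\tfrac1\varepsilon\|\chi\b P\tilde{\b w}\|_{0,\lambda}^2$ helps rather than blows up, and on the fact that the $\varepsilon$-dependence of the coefficients enters only through $\varepsilon\b b$ and $\varepsilon\nabla\b b$, which are $\mathcal{O}(\varepsilon R)$. A secondary difficulty is the existence and regularity in $\mathcal{A}(\Omega_T)$ in the presence of the discontinuous weight $\chi$, which forces one to treat tangential and normal regularity separately, the normal direction being recovered only through the non-characteristic invertibility of $\b A_d$.
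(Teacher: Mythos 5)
Your a priori estimate is essentially the paper's own computation: the same weight $e^{-\lambda t}$, the same integration by parts exploiting the symmetry of the $\b A_j$ and the absence of a spatial boundary, the same favourable sign of the penalty term $\frac{\chi}{\varepsilon}\b P$, the same absorption of the zeroth-order terms by a $\lambda_0(R)$ built from $\|\partial_j(\b A_j(\varepsilon\b b))\|_\infty$ and $\|\b B(\varepsilon\b b)\|_\infty$, and the same two-step extraction of the bounds on $\sqrt{\lambda}\|\tilde{\b w}\|_{0,\lambda}$ and $\frac{1}{\sqrt{\varepsilon}}\|\chi\b P\tilde{\b w}\|_{0,\lambda}$. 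Where you genuinely diverge is on existence and regularity. The paper does not treat (\ref{PDE_energ_estim}) as a Cauchy problem on $\mathbb{R}^d$ with a discontinuous zeroth-order coefficient: it folds the penalized half-space onto the physical one via the reflection $x_d\mapsto -x_d$, obtaining a $2N\times 2N$ symmetric hyperbolic \emph{boundary value} problem on $\mathbb{R}^d_+$ for the pair $(\tilde{\b w}_-,\tilde{\b w}_+)$ with the transmission condition $\tilde{\b w}_{-|x_d=0}=\tilde{\b w}_{+|x_d=0}$, checks that this condition is maximally dissipative (the boundary quadratic form of the doubled matrix $\mathbb{A}_d$ vanishes there and its negative eigenspace has the right dimension, using the invertibility of $\b A_d(\b v_a+\varepsilon\b b)$ for $\varepsilon\leq\varepsilon_1(R)$), and then quotes the classical well-posedness and $H^\infty$ regularity theory for such problems \cite{Rau74,Gue90,Ben07,Cha82} before gluing the two pieces back together. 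Your route --- duality for the Friedrichs system on the whole space, a tangential commutator bootstrap (legitimate because $\chi$ and $\b P$ are invariant under $\mathcal{T}^{\alpha}$ and the penalty keeps its good sign for the backward adjoint), and recovery of $\partial_d\tilde{\b w}$ from the equation via $\b A_d^{-1}$ --- is workable, but it obliges you to re-establish by hand (weak equals strong for the $L^2$ solution, piecewise smoothness across $x_d=0$) facts that the reflection trick imports for free from the boundary-value-problem literature; in exchange it avoids introducing the doubled system. Both arguments rest on the same structural inputs, so your proposal is a correct alternative rather than a gap.
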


\begin{proof}[Proof of the proposition \ref{1st_energy_estim}:]
We use the following notations:
\begin{equation*}
 \b v_{a-}(t,x_1,\dots, x_d)= \b v_{a}(t,x_1,\dots, -x_d) \textsf{ and } \b v_{a+}(t,x_1,\dots, x_d)= \b v_{a}(t,x_1,\dots, + x_d)
\end{equation*}
And we consider this boundary value problem:
\begin{equation*}
\left\{ \begin{array}{l}
\left(\!\!\! \begin{array}{cc} \b A_0(\b u_{a-}+\varepsilon \b b_{-}) & \b 0\\ \b 0 & \b A_0(\b u_{a-}+\varepsilon \b b_{-}) \end{array}\!\!\!\right) \partial_t \left(\!\!\!\begin{array}{c} \tilde{\b w}_-\\ \tilde{\b w}_+ \end{array}\!\!\!\right) + \displaystyle\sum_{j=1}^{d-1}{\left(\!\!\! \begin{array}{cc} \b A_j(\b u_{a-}+\varepsilon \b b_{-}) & \b 0\\ \b 0 & \b A_j(\b u_{a-}+\varepsilon \b b_{-}) \end{array}\!\!\!\right) \partial_j \left(\!\!\! \begin{array}{c} \tilde{\b w}_-\\ \tilde{\b w}_+ \end{array} \!\!\! \right)   } \\
 \qquad + \underbrace{\left(\!\!\! \begin{array}{cc} - \b A_d(\b u_{a-}+\varepsilon \b b_{-}) & \b 0\\ \b 0 & \b A_d(\b u_{a-}+\varepsilon \b b_{-}) \end{array} \!\!\!\right)}_{=\mathbb{A}_d} \partial_d \left(\!\!\! \begin{array}{c} \tilde{\b w}_-\\ \tilde{\b w}_+ \end{array} \!\!\! \right)
 - \left( \!\!\! \begin{array}{cc} \b B(\varepsilon \b b_-) \tilde{\b w}_- & \b 0 \\ \b 0 & \b B(\varepsilon \b b_+) \tilde{\b w}_+ \end{array} \!\!\! \right) \left( \!\!\! \begin{array}{c} \tilde{\b w}_-\\ \tilde{\b w}_+ \end{array}\!\!\! \right) \\
  \qquad + \frac{1}{\varepsilon}\left(\!\!\! \begin{array}{c} \tilde{\b w}_-\\ \b 0 \end{array}\!\!\! \right) = \left(\!\!\! \begin{array}{c} \b g_-\\ \b g_+ \end{array}\!\!\!\right)\\
 \tilde{\b w}_{-|t<0}=\b 0\\
 \tilde{\b w}_{+|t<0}=\b 0\\
 \tilde{\b w}_{-|x_d=0}-\tilde{\b w}_{+|x_d=0}=\b 0
\end{array}\right.
\end{equation*}
Observe that the hyperbolic problem above is symmetric and has maximally dissipative boundary conditions:
\begin{itemize}
 \item At $x_d = 0$, we have $\b A_d(\b v_{a-} + \varepsilon \b b_-)_{|x_d=0} = \b A_d(\b v_{a+} + \varepsilon \b b_+)_{|x_d=0}$ and $\tilde{\b w}_- = \tilde{\b w_+}$.\\ So, $\langle \mathbb{A}_d \left(\begin{array}{c} \tilde{\b w}_-\\ \tilde{\b w}_+\end{array}\right),  \left(\begin{array}{c} \tilde{\b w}_-\\ \tilde{\b w}_+\end{array}\right) \rangle_{\mathbb{R}^{2N}}=0$.
 \item $\b A_d(\b v_{a+} + \varepsilon \b b_+)$ is symmetric and invertible, for $\varepsilon$ sufficiently small ($\varepsilon < \varepsilon_1(R)$). So the eigenspace associated to the negative eigenvalues of $\mathbb{A}_d$ is of dimension $n$.
\end{itemize}

The results from Rauch \cite{Rau74}, Gu\`es \cite{Gue90}, Benzoni-Serre \cite{Ben07} or Chazarain-Piriou \cite{Cha82} (page 475, theorem 6.10) let us claim the existence and the uniqueness of $(\tilde{\b w}_-,\tilde{\b w}_+) \in H^{\infty}(\Omega_T^+)^2$.
According to Sobolev inclusions, we have also $(\tilde{\b w}_-,\tilde{\b w}_+) \in W^{1,\infty}(\Omega_T^+)^2$.

The solution of (\ref{PDE_energ_estim}) can be written:
\begin{equation*}
\tilde{\b w}(t,x_1,\dots,x_d)=\left\{ \begin{array}{l} \tilde{\b w}_-(t,x_1,\dots,-x_d) \textsf{ if } x_d<0\\ \tilde{\b w}_+(t,x_1,\dots,x_d) \textsf{ if } x_d \geq 0\end{array}\right.
\end{equation*}
Furthermore, $\tilde{\b w}$ and its tangential derivatives $\mathcal{T}^{\alpha} \tilde{\b w}$ are in $L^2(\Omega_T)$.

Computing $\partial_d \tilde{\b w}$ thanks to the equation (\ref{PDE_energ_estim}) let us assert that $\tilde{\b w} \in H^1(\Omega_T) \cap H^\infty_{tan}(\Omega_T)$. Finally, $\tilde{\b w} \in \mathcal{A}(\Omega_T)$.

\textbf{Energy estimates for $\tilde{\b w}$:}
We define $\tilde{\b w}_{\lambda}(t,\b x)=\exp\left(-\lambda t \right) \tilde{\b w}(t,\b x)$, such that $\|\tilde{\b w}_{\lambda}\|_{L^2(\Omega_T)}=\|\tilde{\b w}\|_{0,\lambda}$.

As $\b A_0(\b v_a + \varepsilon \b b)$ is uniformly positive definite ($\langle \b A_0(\varepsilon \b b) \tilde{\b w}_{\lambda}, \tilde{\b w}_{\lambda}\rangle_{L^2(\Omega_T)} \geq e_0 \|\tilde{\b w}_{\lambda}\|_{L^2(\Omega_T)}$):
\begin{equation}\label{PDE_lambda}\displaystyle
\left\{\begin{array}{l}
\lambda \b A_0(\varepsilon \b b)\tilde{\b w}_{\lambda} + \sum_{j=0}^{d}{ \b A_j(\varepsilon \b b) \partial_j \tilde{\b w}_{\lambda} } - \b B(\varepsilon \b b) \tilde{\b w}_{\lambda} + \frac{1}{\varepsilon} \chi \b P \tilde{\b w}_{\lambda} = \b g_{\lambda} \quad (t,\b x) \in ]- T_0,T[ \times \mathbb{R}^d\\
 \tilde{\b w}_{\lambda |t<0}=\b 0 
\end{array}\right.
\end{equation}
The $L^2(\Omega_T)$ inner product of (\ref{PDE_lambda}) with $\tilde{\b w}_{\lambda}$ reads:
\begin{equation}\displaystyle
 \lambda e_0 \|\tilde{\b w}_{\lambda}\|_{L^2(\Omega_T)}^2 + \sum_{j=0}^{d}{ \langle \b A_j(\varepsilon \b b) \partial_j \tilde{\b w}_{\lambda}, \tilde{\b w}_{\lambda} \rangle_{L^2(\Omega_T)}} - \| \b B(\varepsilon \b b)\|_{\infty} \|\tilde{\b w}_{\lambda}\|_{L^2(\Omega_T)}^2 + \frac{1}{\varepsilon} \|\chi \b P \tilde{\b w}_{\lambda}\|_{L^2(\Omega_T)}^2 \leq \|\b g_{\lambda}\|_{L^2(\Omega_T)} \|\tilde{\b w}_{\lambda}\|_{L^2(\Omega_T)}
\end{equation}
 Besides, we have, for $j \in \{1,\dots,d\}$:
 \begin{align*}
  \langle \b A_j(\varepsilon \b b) \partial_j \tilde{\b w}_{\lambda}, \tilde{\b w}_{\lambda} \rangle_{L^2(\Omega_T)} & = - \frac12 \langle \partial_j \left(\b A_j(\varepsilon \b b)\right) \tilde{\b w}_{\lambda}, \tilde{\b w}_{\lambda} \rangle_{L^2(\Omega_T)}\\
  		& \geq - \frac12 \|\partial_j \left(\b A_j(\varepsilon \b b)\right)\|_{\infty} \| \tilde{\b w}_{\lambda} \|_{L^2(\Omega_T)}^2
 \end{align*}
 For the time derivative term:
  \begin{align*}
  \langle \b A_0(\varepsilon \b b) \partial_0 \tilde{\b w}_{\lambda}, \tilde{\b w}_{\lambda} \rangle_{L^2(\Omega_T)} \! & = \! - \frac12 \langle \partial_0 \left(\b A_0(\varepsilon \b b)\right)  \tilde{\b w}_{\lambda}, \tilde{\b w}_{\lambda} \rangle_{L^2(\Omega_T)} \! + \! \frac12 \underbrace{\int_{\mathbb{R}^d}{\!\!\tilde{\b w}_{\lambda | t=T}^\top \b A_0(\varepsilon \b b)_{t=T}\tilde{\b w}_{\lambda | t=T} d\b x}}_{\geq 0} \\ 
  & \qquad - \! \frac12 \underbrace{\int_{\mathbb{R}^d}{\!\!\tilde{\b w}_{\lambda | t=-T_0}^\top \b A_0(\varepsilon \b b)_{t=T_0}\tilde{\b w}_{\lambda | t=-T_0} d\b x}}_{=0 \textsf{ as } \tilde{\b w}_{t<0}=\b 0}\\
  		& \geq - \frac12 \|\partial_j \left(\b A_0(\varepsilon \b b)\right)\|_{\infty} \| \tilde{\b w}_{\lambda} \|_{L^2(\Omega_T)}^2
 \end{align*}
 \begin{equation}\displaystyle
 \lambda e_0 \|\tilde{\b w}_{\lambda}\|_{L^2(\Omega_T)}^2 + \frac{1}{\varepsilon} \|\chi \b P \tilde{\b w}_{\lambda}\|_{L^2(\Omega_T)}^2 \! \leq \! \left(\!\!\frac12 \sum_{j=0}^{d}{ \|\partial_j \left(\b A_j(\varepsilon \b b)\right)\|_{\infty}} \! + \! \|\b B(\varepsilon \b b)\|_{\infty}\!\!\right) \! \|\tilde{\b w}_{\lambda}\|_{L^2(\Omega_T)}^2 + \|\b g\|_{L^2(\Omega_T)} \|\tilde{\b w}_{\lambda}\|_{L^2(\Omega_T)}
\end{equation}
Replacing, in the preceding inequality, the term $\frac{1}{\varepsilon} \|\chi \b P \tilde{\b w}_{\lambda}\|_{L^2(\Omega_T)}^2$ by $0$ leads to:
\begin{equation}\displaystyle
\forall \lambda>0, \lambda e_0 \|\tilde{\b w}_{\lambda}\|_{L^2(\Omega_T)} \leq \left(\frac12 \sum_{j=0}^{d}{ \|\partial_j \left(\b A_j(\varepsilon \b b)\right)\|_{\infty}} + \|\b B(\varepsilon \b b)\|_{\infty}\right) \|\tilde{\b w}_{\lambda}\|_{L^2(\Omega_T)} + \|\b g_{\lambda}\|_{L^2(\Omega_T)}
\end{equation}
Then assuming that $\lambda>\lambda_0(R) = \frac{2}{e_0} \left(\frac12 \sum_{j=0}^{d}{ \|\partial_j \left(\b A_j(\varepsilon \b b)\right)\|_{\infty}} + \|\b B(\varepsilon \b b)\|_{\infty}\right)$
\begin{equation}\displaystyle
\lambda  \|\tilde{\b w}_{\lambda}\|_{L^2(\Omega_T)} \leq \frac{2}{e_0 \lambda_0(R)} \|\b g\|_{L^2(\Omega_T)}
\end{equation}
About the estimate on the term $\|\chi \b P \tilde{\b w}_{\lambda}\|_{L^2(\Omega_T)}$, the same process as above is applied:
\begin{align*}
\frac{1}{\sqrt{\varepsilon}} \|\chi \b P \tilde{\b w}_{\lambda}\|_{L^2(\Omega_T)} & \leq \left(\left(\frac12 \sum_{j=0}^{d}{ \|\partial_j \left(\b A_j(\varepsilon \b b)\right)\|_{\infty}} + \|\b B(\varepsilon \b b)\|_{\infty}\right) \|\tilde{\b w}_{\lambda}\|_{L^2(\Omega_T)}^2 + \|\b g\|_{L^2(\Omega_T)} \|\tilde{\b w}_{\lambda}\|_{L^2(\Omega_T)}\right)^{\dfrac12}\\
		& \leq \frac{C_1(R)}{\sqrt{\lambda}} \|\b g_{\lambda}\|_{L^2(\Omega_T)} \qquad \textsf{ using } \lambda \|\tilde{\b w}_{\lambda}\|_{L^2(\Omega_T)} \leq \frac{2}{e_0} \|\b g\|_{L^2(\Omega_T)} \quad (\lambda>\lambda_0(R))
\end{align*}
Finally, defining $C(R)=\frac{2}{e_0}+C_1(R)$, we obtain the energy estimate:
\begin{equation*}
 \sqrt{\lambda}  \|\tilde{\b w}\|_{0,\lambda}+\frac{1}{\sqrt{\varepsilon}} \|\chi \b P \tilde{\b w}\|_{0,\lambda}\leq \frac{C(R)}{\sqrt{\lambda}}\|\b g\|_{0,\lambda}
\end{equation*}

\end{proof}

\subsubsection{Estimates for the tangential derivatives of $\b w^{k+1}$:}
The goal of this subsection is to extend the estimate of the proposition \ref{1st_energy_estim} to the tangential derivatives of $\b w^{n+1}$.

\begin{proposition}\label{Prop_deriv_energ_estim}
We choose $R>0$, $\b b \in \mathcal{A}(]-T_0,T[\times \mathbb{R}^d)$ and $\tilde{\b w} \in \mathcal{A}(]-T_0,T[\times \mathbb{R}^d)$ the solution of the problem (\ref{PDE_energ_estim}).
The following properties are assumed:
\begin{itemize}
 \item $\|\b b\|_{\infty} + \| \nabla \b b \|_{\infty} \leq R$
 \item $0<\varepsilon<\varepsilon_1(R)$
 \item $\lambda>\lambda_0(R)>1$
\end{itemize}
There exists $Q(R)$ (which does not depends on $\tilde{\b w}, \b b,\lambda,\varepsilon$) such that $\tilde{\b w}$ satisfies the estimate:
\begin{equation}\label{Estim_deriv}
\sqrt{\lambda} \| \tilde{\b w}\|_{m,\lambda,\varepsilon}+\frac{1}{\sqrt{\varepsilon}}\|\chi \b P \tilde{\b w}\|_{m,\lambda,\varepsilon} \leq \frac{Q(R)}{\sqrt{\lambda}} \left(\|\b b\|_{m,\lambda,\varepsilon} \left(\|\mathcal{T} \tilde{\b w}\|_{\infty} + \|\chi \b P \tilde{\b w}\|_{\infty} + \|\b g\|_{\infty}\right) + \|\b g\|_{m,\lambda,\varepsilon} \right)
\end{equation}

\end{proposition}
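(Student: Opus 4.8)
The plan is to differentiate the equation (\ref{PDE_energ_estim}) tangentially and to apply the zeroth-order estimate of Proposition \ref{1st_energy_estim} to each derivative, treating the commutators as new source terms. First I would fix a multi-index $\alpha$ with $|\alpha|\leq m$ and apply $\mathcal{T}^{\alpha}$ to (\ref{PDE_energ_estim}). The crucial structural fact is that the penalization operator $\frac{1}{\varepsilon}\chi\b P$ has coefficients depending only on $x_d$ (through $\chi$) and is constant in the tangential variables $(t,x_1,\dots,x_{d-1})$, so $\mathcal{T}^{\alpha}$ commutes with it exactly, $[\mathcal{T}^{\alpha},\frac{1}{\varepsilon}\chi\b P]=0$. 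Hence $\mathcal{T}^{\alpha}\tilde{\b w}$ solves a problem of the very same form as (\ref{PDE_energ_estim}), with the same matrices $\b A_j(\b v_a+\varepsilon\b b)$ and the same doubled symmetric, maximally dissipative boundary structure at $x_d=0$ (the transmission condition $\mathcal{T}^{\alpha}\tilde{\b w}_-=\mathcal{T}^{\alpha}\tilde{\b w}_+$ being the tangential derivative of the original one), but with a modified right-hand side $\b g_{\alpha}=\mathcal{T}^{\alpha}\b g+\sum_{j=0}^{d}[\mathcal{T}^{\alpha},\b A_j(\b v_a+\varepsilon\b b)]\partial_j\tilde{\b w}+[\mathcal{T}^{\alpha},\b B(\varepsilon\b b)]\tilde{\b w}$. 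Since the proof of Proposition \ref{1st_energy_estim} used nothing about the source beyond its $L^2$ size, the same energy argument applied to $\mathcal{T}^{\alpha}\tilde{\b w}$ yields $\sqrt{\lambda}\|\mathcal{T}^{\alpha}\tilde{\b w}\|_{0,\lambda}+\frac{1}{\sqrt{\varepsilon}}\|\chi\b P\mathcal{T}^{\alpha}\tilde{\b w}\|_{0,\lambda}\leq\frac{C(R)}{\sqrt{\lambda}}\|\b g_{\alpha}\|_{0,\lambda}$ for all $\lambda>\lambda_0(R)$.

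The heart of the proof is then to bound $\b g_{\alpha}$ in the weighted norm $\|.\|_{m,\lambda,\varepsilon}$. Since each $\b A_j$ and $\b B$ is a smooth function of $\b v_a+\varepsilon\b b$, with $\b v_a$ fixed and smooth, I would expand the commutators by the Leibniz rule and estimate them with tame (Moser-type) products adapted to the anisotropic spaces $H^m_{tan}$ and the weights $\lambda^{m-|\alpha|}\sqrt{\varepsilon}^{|\alpha|}$. The governing principle is to place all but the highest-order factor in $L^{\infty}$. In the leading commutator term the top tangential derivative falls on a coefficient $\b A_j(\b v_a+\varepsilon\b b)$; by the chain rule acting on the argument $\varepsilon\b b$ this produces a factor $\varepsilon\,\mathcal{T}^{\alpha}\b b$, and the attached weight $\sqrt{\varepsilon}^{|\alpha|}$ turns it into a summand of $\|\b b\|_{m,\lambda,\varepsilon}$ (with an extra $\varepsilon<1$ to spare), multiplied by the $L^{\infty}$ norm of a first derivative of $\tilde{\b w}$. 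This is the origin of the $\|\b b\|_{m,\lambda,\varepsilon}\|\mathcal{T}\tilde{\b w}\|_{\infty}$ term in (\ref{Estim_deriv}). The remaining, genuinely lower-order, contributions carry an extra power of $\lambda^{-1}$ and are absorbed into the left-hand side for $\lambda$ large, using the norm inequality $\lambda\|\cdot\|_{m-1,\lambda,\varepsilon}\leq\|\cdot\|_{m,\lambda,\varepsilon}$.

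A point requiring care is the normal derivative $\partial_d\tilde{\b w}$ entering the $j=d$ commutator, which the tangential calculus does not control directly. Here I would use that the boundary is non-characteristic, so $\b A_d(\b v_a+\varepsilon\b b)$ is invertible for $\varepsilon$ small, and solve (\ref{PDE_energ_estim}) for $\partial_d\tilde{\b w}$, expressing it through tangential derivatives of $\tilde{\b w}$, the source $\b g$, and the penalization term $\frac{1}{\varepsilon}\chi\b P\tilde{\b w}$. Substituting this back is precisely what produces the $\|\chi\b P\tilde{\b w}\|_{\infty}$ and $\|\b g\|_{\infty}$ contributions in (\ref{Estim_deriv}). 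The apparently dangerous factor $\frac{1}{\varepsilon}$ is in fact harmless: in this commutator a tangential derivative must fall on the coefficient $\b A_d(\b v_a+\varepsilon\b b)$, and by the chain rule on the argument $\varepsilon\b b$ this again generates a compensating factor $\varepsilon$, so that only $\|\b b\|_{m,\lambda,\varepsilon}\,\|\chi\b P\tilde{\b w}\|_{\infty}$ survives. The $\frac{1}{\sqrt{\varepsilon}}$ in front of the penalization on the left of (\ref{Estim_deriv}) is unrelated and comes directly from the zeroth-order estimate applied to each $\mathcal{T}^{\alpha}\tilde{\b w}$.

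Finally I would multiply each $\alpha$-estimate by $\lambda^{m-|\alpha|}\sqrt{\varepsilon}^{|\alpha|}$ and sum over $|\alpha|\leq m$, recognizing $\|\tilde{\b w}\|_{m,\lambda,\varepsilon}$ and $\frac{1}{\sqrt{\varepsilon}}\|\chi\b P\tilde{\b w}\|_{m,\lambda,\varepsilon}$ on the left, collecting the constant $Q(R)$ from $C(R)$ and the Moser constants (all uniform under $\|\b b\|_{\infty}+\|\nabla\b b\|_{\infty}\leq R$ and $\varepsilon<\varepsilon_1(R)$), and absorbing the strictly lower-order remainders into the left-hand side thanks to the extra $\lambda^{-1/2}$. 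I expect the main obstacle to be exactly this weight bookkeeping: keeping the three parameters $\lambda$, $\varepsilon$ and the derivative order $|\alpha|$ balanced through the commutator and Moser estimates so that the singular penalization never destroys the estimate and the final bound has precisely the tame form (\ref{Estim_deriv}).
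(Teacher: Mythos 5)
Your proof is correct and follows essentially the same route as the paper's: tangential differentiation, reduction to the zeroth-order estimate of Proposition \ref{1st_energy_estim} with the commutators treated as new sources, Gagliardo--Nirenberg--Moser bounds organized around the weights $\lambda^{m-|\alpha|}\sqrt{\eps}^{|\alpha|}$, and absorption of the lower-order remainders for $\lambda$ large. The only organizational difference is that the paper first multiplies the equation by $\b A_d^{-1}(\b v_a+\eps\b b)$, so the singular term appears as the commutator $\frac{1}{\eps}\left[\chi\b A_d^{-1}\b P,\sqrt{\eps}^{|\alpha|}\mathcal{T}^{\alpha}\right]\tilde{\b w}$, whereas you exploit the exact commutation of $\chi\b P$ with $\mathcal{T}^{\alpha}$ and let the $\eps^{-1}$ resurface through the substitution of $\partial_d\tilde{\b w}$ in the $j=d$ commutator; the two computations are identical in substance. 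One claim should be corrected: a tangential derivative hitting the coefficient $\b A_d(\b v_a+\eps\b b)$ does not always produce a compensating factor $\eps$ --- it does so only when it falls on the argument $\eps\b b$, not when it falls on $\b v_a$. In the latter case the gain comes solely from the weight bookkeeping (the surviving factor $\mathcal{T}^{\delta}\b P\tilde{\b w}$ has $|\delta|<|\alpha|$, leaving a surplus $\sqrt{\eps}\,\lambda^{-1}$), which converts $\eps^{-1}$ into a term $\frac{1}{\lambda\sqrt{\eps}}\|\chi\b P\tilde{\b w}\|_{m,\lambda,\eps}$; this is not of the form appearing on the right of (\ref{Estim_deriv}) and must instead be absorbed into the left-hand side for $\lambda$ large, exactly as in the paper's estimate of the penalization commutator.
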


\begin{proof}[Proof of the proposition \ref{Prop_deriv_energ_estim}:]
Let us note $\alpha \in \mathbb{N}^d$ such that $|\alpha|\leq m$. In this proof, we consider that $cst(R)$ is a constant which does not depends on $\tilde{\b w}, \b b ,\lambda,\varepsilon$. 

For two operators $\b S$ and $\b Q$, the commutation operator $[\b S,\b Q]$ is defined by the formula:
\begin{equation*}
[\b S,\b Q]\b \Phi = \b S\left(\b Q(\b \Phi)\right) - \b Q\left(\b S(\b \Phi \right))
\end{equation*}
After applying the tangential derivative operator $\sqrt{\varepsilon}^{|\alpha|} \mathcal{T}^{\alpha}$ to (\ref{PDE_energ_estim}), the new hyperbolic problem reads:
\begin{equation}\label{Penal_dem_deriv}
\left\{\!\!\begin{array}{l}
  \sum_{j=0}^{d}{\b A_j(\varepsilon \b b) \partial_j \left(\sqrt{\varepsilon}^{|\alpha|} \mathcal{T}^{\alpha} \tilde{\b w} \right)} - \b B(\varepsilon \b b) \left(\sqrt{\varepsilon}^{|\alpha|} \mathcal{T}^{\alpha} \tilde{\b w} \right) + \frac{1}{\varepsilon} \chi \b P \left(\sqrt{\varepsilon}^{|\alpha|} \mathcal{T}^{\alpha} \tilde{\b w} \right) = \\
  \quad \b A_d(\varepsilon \b b) \left(-\displaystyle\sum_{j=0}^{d-1}{\left[\b A_d^{-1}(\varepsilon \b b) \b A_j(\varepsilon \b b) \partial_j, \sqrt{\varepsilon}^{|\alpha|} \mathcal{T}^{\alpha} \right] \tilde{\b w}} - \frac{1}{\varepsilon} \left[\chi \b A_d^{-1}(\varepsilon \b b) \b P, \sqrt{\varepsilon}^{|\alpha|} \mathcal{T}^{\alpha} \right] \tilde{\b w} +  \sqrt{\varepsilon} \mathcal{T}^{\alpha}\left( A_d^{-1}(\varepsilon \b b) \b g\right) \right)\\
 \sqrt{\varepsilon}^{|\alpha|} \mathcal{T}^{\alpha} \tilde{\b w}_{]-T_0,0[}=\b 0
\end{array}\right.
\end{equation}

We will apply the proposition \ref{1st_energy_estim} to the hyperbolic problem (\ref{Penal_dem_deriv}), with the unknown $\sqrt{\varepsilon}^{|\alpha|} \mathcal{T}^{\alpha} \tilde{\b w}$, and so, we need to estimate the $\|.\|_{0,\lambda}$ norm of the right hand side. For the terms composed of a product, the lemma presented below gives a useful inequality (for a proof of this lemma, see \cite{Gue90}):

\begin{lemma}[Gagliardo-Niremberg-Moser inequality]\label{lemma_Gagliardo-Niremberg-Moser}
Let us consider $\b \Phi_1,\dots,\b \Phi_p \in H^m_{tan}(\Omega_T) \cap L^{\infty}(\Omega_T)$, $\b \alpha_{.,1},\dots,\b \alpha_{.,p} \in \mathbb{N}^d$ ($\b \alpha_{.,l}=(\alpha_{0,l},\dots,\alpha_{d,l})^t$) and $k \in \mathbb{N}$, such that $\sum_{l=1}^{p}{\sum_{i=0}^d{\alpha_{i,l}}}\leq k \leq m$. So, there exists $r>0$, independent of $\varepsilon, \lambda, \b \Phi_1,\dots,\b \Phi_p$ such that:
\begin{equation*}
\lambda^{m-k} \sqrt{\varepsilon}^{\sum_{l=1}^{p}{\sum_{i=0}^d{\alpha_{i,l}}}} \| \mathcal{T}^{\b \alpha_{.,1}} \b \Phi_1 \dots \mathcal{T}^{\b \alpha_{.,p}} \b \Phi_p \|_{0,\lambda} \leq  r \, \sum_{l=1}^p{\left(\prod_{q \neq l} \|\b \Phi_q\|_{\infty}\right) \|\b \Phi_l\|_{m,\lambda,\varepsilon}}
\end{equation*}
\end{lemma}

Let us begin with the second term in the right hand side of (\ref{Penal_dem_deriv}), which is the more delicate to estimate because of the $\varepsilon^{-1}$.

It is necessary to provide an estimate for $\left[\chi \b A_d^{-1} \b P, \sqrt{\varepsilon}^{|\alpha|} \mathcal{T}^{\alpha}\right] \tilde{\b w}$. 
First, this term is expanded by this way ($\b L_{\beta_., \gamma_.,\delta}$ are matrices):
\begin{equation}
\left[\chi \b A_d^{-1} \b P, \sqrt{\varepsilon}^{|\alpha|} \mathcal{T}^{\alpha}\right] \tilde{\b w} = \chi \sum_{\begin{array}{l}\sum \beta_p + \sum \gamma_q \\ + \delta \leq \alpha \end{array}}{\b L_{\beta_., \gamma_.,\delta}(\varepsilon \b b) \mathcal{T}^{\beta_1} \b v_a \dots \mathcal{T}^{\beta_k} \b v_a \mathcal{T}^{\gamma_1} (\varepsilon \b b) \dots \mathcal{T}^{\gamma_l} (\varepsilon \b b) \mathcal{T}^{\delta} \b P \tilde{\b w}}
\end{equation}
where $|\delta|<|\alpha|$.
Considering the $\|.\|_{0,\lambda}$ norm of $\left[\chi \b A_d^{-1} \b P, \sqrt{\varepsilon}^{|\alpha|} \mathcal{T}^{\alpha}\right] \tilde{\b w}$ gives the estimate below:
\begin{align*}
\frac{\lambda^{m-|\alpha|}}{\varepsilon} & \left\| \left[\chi \b A_d^{-1} \b P, \sqrt{\varepsilon}^{|\alpha|} \mathcal{T}^{\alpha}\right] \tilde{\b w}\right\|_{0,\lambda}\\
	& = \frac{\lambda^{m-|\alpha|}}{\varepsilon}  \left\| \chi \sqrt{\varepsilon}^{|\alpha|} \sum_{\begin{array}{l}\sum \beta_p + \sum \gamma_q \\ + \delta \leq \alpha \end{array}}{\b L_{\beta_., \gamma_.,\delta}(\varepsilon \b b) \mathcal{T}^{\beta_1} \b v_a \dots \mathcal{T}^{\beta_k} \b v_a \mathcal{T}^{\gamma_1} (\varepsilon \b b) \dots \mathcal{T}^{\gamma_l} (\varepsilon \b b) \mathcal{T}^{\delta} \b P \tilde{\b w}}  \right\|_{0,\lambda}\\
	& \leq \frac{\lambda^{m-|\alpha|}}{\varepsilon} \Bigg( \sum_{(\gamma_1,\dots\,\gamma_l)=0}{ cst(R) \left\|\sqrt{\varepsilon}^{|\alpha|}\chi \mathcal{T}^{\beta_1} \b v_a \dots \mathcal{T}^{\beta_k} \b v_a \mathcal{T}^{\delta} \b P \tilde{\b w} \right\|_{0,\lambda}}\\
	& \qquad + \sum_{(\gamma_1,\dots\,\gamma_l)\neq 0}{ cst(R)  \left\| \sqrt{\varepsilon}^{|\alpha|} \chi \mathcal{T}^{\beta_1} \b v_a \dots \mathcal{T}^{\beta_k} \b v_a \mathcal{T}^{\gamma_1} (\varepsilon \b b) \dots \mathcal{T}^{\gamma_l} (\varepsilon \b b) \mathcal{T}^{\delta} \b P \tilde{\b w} \right\|_{0,\lambda}} \Bigg)\\
	&\leq \Bigg( \sum_{(\gamma_1,\dots\,\gamma_l)=0} {cst(R) \sqrt{\varepsilon}^{|\alpha|-|\delta|} \lambda^{|\delta|-|\alpha|} \lambda^{m-|\delta|}  \|\sqrt{\varepsilon}^{|\delta|} \chi \mathcal{T}^{\delta} \b P \tilde{\b w}\|_{0,\lambda}}\\
	& \qquad + \! \! \!\sum_{(\gamma_1,\dots\,\gamma_l)\neq 0}{\! \! \! \varepsilon cst(R) \left( \|\b b\|_{m,\lambda,\varepsilon} \|\chi \b P \tilde{\b w}\|_{\infty} + \|\chi \b P \tilde{\b w}\|_{m,\lambda,\varepsilon}\right)} \Bigg) \quad \textsf{ (Gagliardo-Niremberg-Moser estimates)}
\end{align*}
Finally, we obtain:
\begin{equation*}
\frac{\lambda^{m-|\alpha|}}{\varepsilon} \left\| \left[\chi \b A_d^{-1} \b P, \sqrt{\varepsilon}^{|\alpha|} \mathcal{T}^{\alpha}\right] \tilde{\b w}\right\|_{0,\lambda} \leq 
 cst(R) \left(\frac{1}{\sqrt{\varepsilon} \lambda}\|\chi \b P \tilde{\b w}\|_{m,\lambda,\varepsilon} + \left( \|\b b\|_{m,\lambda,\varepsilon} \|\chi \b P \tilde{\b w}\|_{\infty} + \|\chi \b P \tilde{\b w}\|_{m,\lambda,\varepsilon}\right) \right)\\ 
\end{equation*}
Let us note that the role of the coefficients $\sqrt{\varepsilon}^{|\alpha|}$ in the definition of the norm $\|.\|_{m,\lambda,\varepsilon}$ appears in this estimate:
indeed, these coefficients avoid the presence of a $\varepsilon^{-1}$, replacing it by a $1/\sqrt{\varepsilon}$.

For the first term of the right hand side of (\ref{Penal_dem_deriv}), the treatment is more classical because it only has derivative orders less or equal to $|\alpha|$. So, using the Gagliardo-Niremberg-Moser inequality (lemma \ref{lemma_Gagliardo-Niremberg-Moser}):
\begin{align*}
\forall j \in \{0, \dots, d-1\}, \lambda^{m-|\alpha|} \left\| \left[ \b A_d^{-1}(\varepsilon \b b) \b A_j(\varepsilon \b b) \partial_j, \sqrt{\varepsilon}^{|\alpha|} \mathcal{T}^{\alpha} \right] \tilde{\b w} \right\|_{0,\lambda} & \leq cst(R) \left( \| \varepsilon \b b \|_{m, \lambda, \varepsilon} \| \mathcal{T} \tilde{\b w} \|_{\infty} + \|\tilde{\b w}\|_{m,\lambda,\varepsilon} \right)\\
		& \leq cst(R) \left( \| \varepsilon \b b \|_{m, \lambda, \varepsilon} \| \mathcal{T} \tilde{\b w} \|_{\infty} + \|\tilde{\b w}\|_{m,\lambda,\varepsilon} \right)\\
\end{align*}

Applying the Gagliardo-Niremberg-Moser estimate to the last term of the right hand side of (\ref{Penal_dem_deriv}), there exists $cst(R)$ which does not depends on $\b b, \b g, \lambda, \varepsilon$ such that: 
\begin{equation*}
\|\sqrt{\varepsilon}^{|\alpha|} \mathcal{T}^{\alpha}\left(\b A_d^{-1}(\varepsilon \b b) \b g \right)\|_{0,\lambda} \leq cst(R) \left(\|\b b\|_{m,\lambda,\varepsilon} \|\b g \|_{\infty} + \|\b g\|_{m,\lambda,\varepsilon} \right)
\end{equation*}

Finally, the combination of the three estimates leads to the result of the proposition \ref{Prop_deriv_energ_estim}.
\end{proof}

\subsubsection{$L^{\infty}$ estimates}
In order to obtain the induction property, it is necessary to prove that $\| \tilde{\b w}\|_{\infty} + \| \nabla \tilde{\b w}\|_{\infty}\leq R$, for some $\lambda$, and some $\varepsilon$ sufficiently small.
\begin{lemma}\label{lemma_l_infty_estimate_w}
Remember that $m_0=\lfloor \frac{d}{2} \rfloor +2$.
Assume that $\| \b g\|_{m,\lambda,\varepsilon}\leq \zeta_m(\lambda)\varepsilon^{M-1}$ (with $\zeta_m(\lambda)>1$), $\| \b g\|_{\infty} \leq c \varepsilon^{M-1}$, $\|\b b \|_{m,\lambda,\varepsilon} \leq \varepsilon^{M-1}$ and $\|\b b \|_{\infty}+\|\nabla \b b \|_{\infty} \leq 1$. 
There exists $\lambda_0(1),\varepsilon_0(\lambda)$ (which does not depend on $\b b,\tilde{\b w}$) such that, for $\lambda\geq \lambda_0(1)$ and $0<\varepsilon\leq \varepsilon_0(\lambda)$ the following estimates holds:

\begin{equation*}
\|\tilde{\b w}\|_{\infty} + \|\nabla \tilde{\b w}\|_{\infty}\leq 1
\end{equation*}
\begin{equation*}
\|\tilde{\b w}\|_{\infty} + \|\mathcal{T} \tilde{\b w}\|_{\infty} \leq \zeta_m(\lambda) \varepsilon^{M-\frac12 m_0-\frac52}
\end{equation*}
\begin{equation*}
\| \tilde{\b w}\|_{m,\lambda,\varepsilon} \leq \zeta_m(\lambda) \varepsilon^{M-1}
\end{equation*}
\begin{equation*}
\|\sqrt{\varepsilon} \partial_d \tilde{\b w}\|_{m-1,\lambda,\varepsilon} \leq D(\lambda) \varepsilon^{M-\frac32}
\end{equation*}
Where $D(\lambda)$ is a positive function which does not depend on $\b b, \tilde{\b w}, \varepsilon$. 
\end{lemma}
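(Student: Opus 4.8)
The plan is to close a bootstrap argument. Propositions~\ref{1st_energy_estim} and~\ref{Prop_deriv_energ_estim} already bound $\tilde{\b w}$ in the weighted tangential norms, but the right-hand side of~(\ref{Estim_deriv}) still carries the $L^\infty$ quantities $\|\mathcal{T}\tilde{\b w}\|_\infty$ and $\|\chi\b P\tilde{\b w}\|_\infty$ of the very unknown we are estimating, so the loop must be closed. First I would take $R=1$ in Proposition~\ref{Prop_deriv_energ_estim} (legitimate since $\|\b b\|_\infty+\|\nabla\b b\|_\infty\le 1$) and substitute the hypotheses, obtaining
\[ \sqrt{\lambda}\,\|\tilde{\b w}\|_{m,\lambda,\varepsilon}+\tfrac{1}{\sqrt{\varepsilon}}\|\chi\b P\tilde{\b w}\|_{m,\lambda,\varepsilon}\le\tfrac{Q(1)}{\sqrt{\lambda}}\Big(\varepsilon^{M-1}\big(\|\mathcal{T}\tilde{\b w}\|_\infty+\|\chi\b P\tilde{\b w}\|_\infty+c\,\varepsilon^{M-1}\big)+\zeta_m(\lambda)\,\varepsilon^{M-1}\Big). \]
The decisive tool is an anisotropic Sobolev embedding (denote by $C$ a constant depending only on $m,d$, the $\lambda$-dependence being folded into $\zeta_m(\lambda)$): slicing in $x_d$ and using $m_0-1>d/2$, every $\b\Phi$ with $\b\Phi,\partial_d\b\Phi\in H^{m_0-1}_{tan}$ satisfies $\|\b\Phi\|_\infty^2\le C\,\|\b\Phi\|_{H^{m_0-1}_{tan}}\big(\|\b\Phi\|_{H^{m_0-1}_{tan}}+\|\partial_d\b\Phi\|_{H^{m_0-1}_{tan}}\big)$, while un-weighting costs $\|\b\Phi\|_{H^k_{tan}}\le C\,\varepsilon^{-k/2}\|\b\Phi\|_{m,\lambda,\varepsilon}$ for $k\le m$ (the factors $\lambda^{k-m}\le 1$ are harmless). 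Applied to the offending $L^\infty$ terms, this bounds them by $\varepsilon^{-m_0/2}$ times $\|\tilde{\b w}\|_{m,\lambda,\varepsilon}$ or $\|\chi\b P\tilde{\b w}\|_{m,\lambda,\varepsilon}$, so the bad products inherit the factor $\varepsilon^{M-1}\varepsilon^{-m_0/2}=\varepsilon^{M-1-m_0/2}$, whose exponent is positive because $M>3+\tfrac12 m_0$. Hence for $\varepsilon\le\varepsilon_0(\lambda)$ they are absorbed by the left-hand side; choosing $\lambda\ge\lambda_0(1)$ with $Q(1)/\lambda\le 1$ then gives the third estimate $\|\tilde{\b w}\|_{m,\lambda,\varepsilon}\le\zeta_m(\lambda)\varepsilon^{M-1}$, as well as $\tfrac{1}{\sqrt{\varepsilon}}\|\chi\b P\tilde{\b w}\|_{m,\lambda,\varepsilon}\le C\zeta_m(\lambda)\varepsilon^{M-1}$.

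Second, for the normal derivative I would solve~(\ref{PDE_energ_estim}) for $\partial_d\tilde{\b w}$, which is possible since $\b A_d$ is invertible:
\[ \partial_d\tilde{\b w}=\b A_d^{-1}\Big(\b g-\b A_0\partial_t\tilde{\b w}-\sum_{j=1}^{d-1}\b A_j\partial_j\tilde{\b w}+\b B\tilde{\b w}-\tfrac1\varepsilon\chi\b P\tilde{\b w}\Big). \]
Taking $\|\sqrt{\varepsilon}\,\cdot\,\|_{m-1,\lambda,\varepsilon}$ and using the product inequality of Lemma~\ref{lemma_Gagliardo-Niremberg-Moser}, every tangential term is controlled through $\sqrt{\varepsilon}\,\|\mathcal{T}\tilde{\b w}\|_{m-1,\lambda,\varepsilon}\le\|\tilde{\b w}\|_{m,\lambda,\varepsilon}$, whereas the singular term $\tfrac1\varepsilon\chi\b P\tilde{\b w}$ is dealt with by the second bound of the first step; here it is essential that $\chi$ depends only on $x_d$, so $\mathcal{T}^\alpha\chi=0$ for $|\alpha|\ge 1$ and $\chi$ commutes with every tangential derivative. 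This yields the fourth estimate $\|\sqrt{\varepsilon}\,\partial_d\tilde{\b w}\|_{m-1,\lambda,\varepsilon}\le D(\lambda)\varepsilon^{M-3/2}$.

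Third, the pointwise estimate on $\tilde{\b w}$ and its tangential derivatives follows by feeding the third and fourth estimates into the same anisotropic embedding, applied to $\tilde{\b w}$ and to $\mathcal{T}\tilde{\b w}$. Each application loses a factor $\varepsilon^{-m_0/2}$ from un-weighting and half a power of $\varepsilon$ from the $\partial_d$ factor (the normal derivative being re-expressed, where needed, through the tangential right-hand side of the PDE at the price of one extra $\varepsilon^{-1/2}$ from the penalization term); tracking these losses against the gains $\varepsilon^{M-1}$ and $\varepsilon^{M-3/2}$ produces the claimed rate $\|\tilde{\b w}\|_\infty+\|\mathcal{T}\tilde{\b w}\|_\infty\le\zeta_m(\lambda)\varepsilon^{M-\frac12 m_0-\frac52}$. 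Since $M>3+\tfrac12 m_0$ the exponent $M-\tfrac12 m_0-\tfrac52$ is strictly positive, so this quantity tends to $0$ and, for $\varepsilon$ small, only $\|\partial_d\tilde{\b w}\|_\infty$ remains to be controlled to reach $\|\tilde{\b w}\|_\infty+\|\nabla\tilde{\b w}\|_\infty\le 1$; I would again read it off the PDE, all of whose right-hand-side contributions (including $\tfrac1\varepsilon\|\chi\b P\tilde{\b w}\|_\infty$, which is small because $\|\chi\b P\tilde{\b w}\|_\infty$ itself carries a high power of $\varepsilon$) are positive powers of $\varepsilon$. Fixing first $\lambda\ge\lambda_0(1)$ and then $\varepsilon\le\varepsilon_0(\lambda)$ makes all four estimates hold at once.

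I expect the main obstacle to be the circular dependence in the first step: the energy inequality~(\ref{Estim_deriv}) bounds $\tilde{\b w}$ in terms of $L^\infty$ norms of $\tilde{\b w}$ itself, and closing the loop forces one to accept the $\varepsilon^{-m_0/2}$ blow-up of the anisotropic embedding. The scheme survives only because the prefactor $\|\b b\|_{m,\lambda,\varepsilon}\le\varepsilon^{M-1}$ over-compensates this blow-up precisely under the hypothesis $M>3+\tfrac12 m_0$; the secondary difficulty is the penalization term $\tfrac1\varepsilon\chi\b P\tilde{\b w}$, tamed only because the energy estimates control $\tfrac1{\sqrt{\varepsilon}}\|\chi\b P\tilde{\b w}\|$ and because $\chi$ commutes with the tangential derivatives.
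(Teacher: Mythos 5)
Your overall strategy is the paper's: combine Proposition \ref{Prop_deriv_energ_estim}, the identity $\partial_d\tilde{\b w}=\b A_d^{-1}(\cdots)$ read off from (\ref{PDE_energ_estim}), and an anisotropic Sobolev embedding, then absorb the self-referential terms for $\varepsilon$ small using $M>3+\frac12 m_0$. But the sequencing of your first step contains a genuine circularity that, as written, breaks the argument. You propose to close the loop at the level of the weighted tangential norms by bounding $\|\mathcal{T}\tilde{\b w}\|_{\infty}$ and $\|\chi\b P\tilde{\b w}\|_{\infty}$ by ``$\varepsilon^{-m_0/2}$ times $\|\tilde{\b w}\|_{m,\lambda,\varepsilon}$ or $\|\chi\b P\tilde{\b w}\|_{m,\lambda,\varepsilon}$.'' This is not available: the embedding you yourself state (correctly) requires $\|\partial_d\b\Phi\|_{H^{m_0-1}_{tan}}$ on the right-hand side, and no purely tangential norm controls $L^\infty$ in the $x_d$ direction. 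The normal-derivative bound is only produced in your second step, which in turn consumes the conclusion of your first step ($\|\tilde{\b w}\|_{m,\lambda,\varepsilon}\leq\zeta_m(\lambda)\varepsilon^{M-1}$ and the bound on $\frac{1}{\sqrt{\varepsilon}}\|\chi\b P\tilde{\b w}\|_{m,\lambda,\varepsilon}$). Steps one and two therefore each presuppose the other.

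The paper breaks this deadlock by never absorbing at the weighted-norm level. It first derives, from the Gagliardo--Nirenberg--Moser lemma applied to (\ref{equa_partial_d}) \emph{combined with} Proposition \ref{Prop_deriv_energ_estim}, the single inequality (\ref{partial_d_estim})
\begin{equation*}
\|\tilde{\b w}\|_{m,\lambda,\varepsilon}+\|\sqrt{\varepsilon}\,\partial_d\tilde{\b w}\|_{m-1,\lambda,\varepsilon}\leq\frac{cst}{\lambda}\,\varepsilon^{M-\frac32}\left(\|\mathcal{T}\tilde{\b w}\|_{\infty}+\|\tilde{\b w}\|_{\infty}+\xi\,\zeta_m(\lambda)\sqrt{\varepsilon}\right),
\end{equation*}
whose left-hand side is exactly the quantity the embedding (Lemma \ref{L_infty_estim_lemme}) needs; only then does it apply the embedding to $\tilde{\b w}$ and $\mathcal{T}\tilde{\b w}$ and absorb $\|\tilde{\b w}\|_{\infty}+\|\mathcal{T}\tilde{\b w}\|_{\infty}$ into itself at the $L^\infty$ level, using that the prefactor $cst\,e^{\lambda T}\lambda^{-(m-m_0)}\varepsilon^{M-\frac12 m_0-\frac52}$ can be made $\leq\frac12$. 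The weighted-norm and $\partial_d$ estimates then fall out of (\ref{partial_d_estim}) \emph{a posteriori}. Your exponent bookkeeping for the final rates is consistent with the paper's, and you correctly identify the circular dependence as the crux; the repair is to bundle the tangential and normal estimates into one inequality before invoking the embedding, rather than trying to establish them sequentially.
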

\begin{proof}[Proof of the lemma \ref{lemma_l_infty_estimate_w}:] To prove this lemma let us consider the equality:
\begin{equation}\label{equa_partial_d}
\partial_d \tilde{\b w} = \b A_d^{-1}(\varepsilon \b b)\left(- \sum_{j=0}^{d_1}{ \b A_j(\varepsilon \b b) \partial_j \tilde{\b w}} +  \b B(\varepsilon \b b) \tilde{\b w} - \frac{\chi}{\varepsilon} \b P \tilde{\b w} + \b g \right)
\end{equation}
For sake of simplicity, let us consider again, for each inequality, that $cst$ is a constant real value, \emph{i.e.} which does not depends on $\varepsilon, \lambda, \b b, \tilde{\b w}$.
Thanks to the Gagliardo-Niremberg-Moser estimate, we obtain:
\begin{align*}
\|\sqrt{\varepsilon} \partial_d \tilde{\b w}\|_{m-1,\lambda,\varepsilon} & \leq cst \Big(\sqrt{\varepsilon} \left(\|\mathcal{T} \tilde{\b w}\|_{m-1,\lambda,\varepsilon} \! + \! \| \tilde{\b w }\|_{m-1,\lambda,\varepsilon} \! + \! \|\b b\|_{m-1,\lambda,\varepsilon} (\|\mathcal{T} \tilde{\b w}\|_{\infty} \! + \! \|\tilde{\b w}\|_{\infty} \!+ \!\|\b g\|_{\infty}) \! + \!\| \b g \|_{m-1,\lambda,\varepsilon}\right) \\ 
 & \qquad+ \frac{1}{\sqrt{\varepsilon}} \left(\| \chi \b P \tilde{\b w }\|_{m-1,\lambda,\varepsilon} \!+\! \|\b b\|_{m-1,\lambda,\varepsilon} \| \chi \b P \tilde{\b w }\|_{\infty}\right) \Big)
\end{align*}
Recall that $0<\varepsilon\leq \varepsilon_1(1)$ and $\lambda\geq \lambda_0(1)$.
Comparing the norms $\|.\|_{m-1,\lambda,\varepsilon}$ and $\|.\|_{m,\lambda,\varepsilon}$:
\begin{align*}
\|\sqrt{\varepsilon} \partial_d \tilde{\b w}\|_{m-1,\lambda,\varepsilon} & \leq cst \Bigg( \left(1 + \frac{\sqrt{\varepsilon}}{\lambda}\right) \|\tilde{\b w}\|_{m,\lambda,\varepsilon} + \frac{\sqrt{\varepsilon}}{\lambda} \|\b b\|_{m,\lambda,\varepsilon}(\|\mathcal{T} \tilde{\b w}\|_{\infty} + \|\tilde{\b w}\|_{\infty} + \|\b g\|_{\infty}) + \frac{\sqrt{\epsilon}}{\lambda} \|\b g\|_{m,\lambda,\varepsilon} \\ 
& \qquad + \frac{1}{\lambda \sqrt{\varepsilon}} \left(\| \chi \b P \tilde{\b w }\|_{m,\lambda,\varepsilon} + \|\b b\|_{m,\lambda,\varepsilon} \| \chi \b P \tilde{\b w }\|_{\infty}\right)\Bigg)
\end{align*}
Adding the term $\|\tilde{\b w}\|_{m,\lambda,\varepsilon}$ leads to:
\begin{align*}
\|\tilde{\b w}\|_{m,\lambda,\varepsilon} + \|\sqrt{\varepsilon} \partial_d \tilde{\b w}\|_{m-1,\lambda,\varepsilon} & \leq cst \Bigg(  \|\tilde{\b w}\|_{m,\lambda,\varepsilon} \! + \! \frac{1}{\lambda \sqrt{\varepsilon}} \| \chi \b P \tilde{\b w }\|_{m,\lambda,\varepsilon} \! + \! \frac{\sqrt{\varepsilon}}{\lambda} \|\b b\|_{m,\lambda,\varepsilon}(\|\mathcal{T} \tilde{\b w}\|_{\infty} \! + \! \|\tilde{\b w}\|_{\infty} \! + \! \|\b g\|_{\infty})\\
 & \qquad + \frac{1}{\lambda \sqrt{\varepsilon}}\|\b b\|_{m,\lambda,\varepsilon} \| \tilde{\b w }\|_{\infty} \! + \! \zeta_m(\lambda) \frac{\varepsilon^{M-\frac12}}{\lambda}\Bigg)\\
\|\tilde{\b w}\|_{m,\lambda,\varepsilon} + \|\sqrt{\varepsilon} \partial_d \tilde{\b w}\|_{m-1,\lambda,\varepsilon}& \leq cst \Bigg(\frac{1}{\sqrt{\lambda}} \left( \sqrt{\lambda} \|\tilde{\b w}\|_{m,\lambda,\varepsilon} \! + \! \frac{1}{\sqrt{\varepsilon}} \| \chi \b P \tilde{\b w }\|_{m,\lambda,\varepsilon}\right)\\
 & \qquad + \frac{1}{\lambda \sqrt{\varepsilon}}\|\b b \|_{m,\lambda,\varepsilon} \left(\|\mathcal{T} \tilde{\b w}\|_{\infty} \! + \! \|\tilde{\b w}\|_{\infty} \! + \! \|\b g\|_{\infty}\right) +  \zeta_m(\lambda) \frac{\varepsilon^{M-\frac12}}{\lambda}\Bigg)
\end{align*}
According to the proposition \ref{Prop_deriv_energ_estim}, 
\begin{align*}
\|\tilde{\b w}\|_{m,\lambda,\varepsilon} + \|\sqrt{\varepsilon} \partial_d \tilde{\b w}\|_{m-1,\lambda,\varepsilon} & \leq cst \Bigg(\frac{1}{\lambda} \left(\underbrace{\|\b b\|_{m,\lambda,\varepsilon}}_{\leq \zeta_m(\lambda) \varepsilon^{M-1}} \left(\|\mathcal{T}\tilde{\b w} \|_{\infty} + \|\tilde{\b w} \|_{\infty} + \underbrace{\|\b g \|_{\infty}}_{\leq c \varepsilon^{M-1}} \right) + \varepsilon^{M-1}\right) \\
 & \qquad + \frac{1}{\lambda \sqrt{\varepsilon}}\|\b b \|_{m,\lambda,\varepsilon} \left(\|\mathcal{T} \tilde{\b w}\|_{\infty} + \|\tilde{\b w}\|_{\infty} + \|\b g\|_{\infty}\right)+ \zeta_m(\lambda)\frac{\varepsilon^{M-1}}{\lambda} \Bigg)\\
\end{align*} 
Remembering that $\lim_{\lambda \to \infty} \zeta_m(\lambda)= +\infty$ and that $\varepsilon\leq 1$, we define $\xi$ (independent of $\lambda, \varepsilon$), satisfying:
\begin{equation*}
 \xi \geq \frac{1+\frac{\zeta_m(\lambda)}{\lambda}(1+2 c)}{\zeta_m(\lambda)}
 \end{equation*} 
\begin{equation}\label{partial_d_estim}
\|\tilde{\b w}\|_{m,\lambda,\varepsilon} + \|\sqrt{\varepsilon} \partial_d \tilde{\b w}\|_{m-1,\lambda,\varepsilon} \leq \frac{cst}{\lambda} \varepsilon^{M-\frac32} \left( \|\mathcal{T} \tilde{\b w}\|_{\infty} + \|\tilde{\b w} \|_{\infty} + \xi \, \zeta_m(\lambda) \sqrt{\varepsilon} \right)
\end{equation}
Thanks to the Cauchy-Schwartz and Parseval equality, we can prove the lemma below (see \cite{Gue90} for more details):
\begin{lemma}\label{L_infty_estim_lemme}
Let us define $d \in \mathbb{N}^*, m\geq m_0= \lfloor \frac{d}{2} \rfloor + 2, 0<\varepsilon<1$ and $\lambda>0$. There exists a number $\kappa \geq 0$ (which only depends on $d, m_0,T_0$ and $T$) such that for all $\b \Phi \in H^m_{tan}(\Omega_T) \cap L^{\infty}(\Omega_T)$:
\begin{equation*}
\|\b \Phi\|_{\infty} \leq \kappa \frac{e^{\lambda T}}{\lambda^{m-m_0} \sqrt{\varepsilon}^{m_0+1}}\left(\|\b \Phi\|_{m,\lambda, \varepsilon} + \|\sqrt{\varepsilon} \partial_d \b \Phi\|_{m-1,\lambda, \varepsilon} \right)
\end{equation*}
\end{lemma}

Applying the lemma \ref{L_infty_estim_lemme} to $\tilde{\b w}$, leads to:
\begin{equation*}
\|\tilde{\b w}\|_{\infty} \leq cst \frac{e^{\lambda T}}{\lambda^{m-m_0} \sqrt{\varepsilon}^{m_0+1}} \left( \|\tilde{\b w}\|_{m,\lambda,\varepsilon} + \|\sqrt{\varepsilon} \partial_d \tilde{\b w}\|_{m-1,\lambda,\varepsilon} \right)
\end{equation*}
 As $\sqrt{\varepsilon} \| \mathcal{T} \tilde{\b w}\|_{m-1,\lambda,\varepsilon} \leq  \| \tilde{\b w}\|_{m,\lambda,\varepsilon}$ and $\sqrt{\varepsilon} \|\sqrt{\varepsilon} \partial_d \tilde{\b w}\|_{m-2,\lambda,\varepsilon} \leq \|\sqrt{\varepsilon} \partial_d \tilde{\b w}\|_{m-1,\lambda,\varepsilon}$.
\begin{align*} 
\|\mathcal{T} \tilde{\b w}\|_{\infty} & \leq cst \frac{e^{\lambda T}}{\lambda^{m-1-m_0}\sqrt{\varepsilon}^{m_0+1}}\left(\| \mathcal{T} \tilde{\b w}\|_{m-1,\lambda,\varepsilon} + \|\sqrt{\varepsilon} \partial_d \mathcal{T} \tilde{\b w}\|_{m-2,\lambda, \varepsilon} \right)\\
 & \leq cst \frac{e^{\lambda T}}{\lambda^{m-1-m_0}\sqrt{\varepsilon}^{m_0+2}}\left(\| \tilde{\b w}\|_{m,\lambda,\varepsilon} + \|\sqrt{\varepsilon} \partial_d \tilde{\b w}\|_{m-1,\lambda, \varepsilon} \right)
\end{align*}
So, as $\lambda \geq \lambda_0(1)$ and $0<\varepsilon\leq \varepsilon_1(1)$, we obtain:
\begin{align*}
\|\tilde{\b w}\|_{\infty} + \|\mathcal{T} \tilde{\b w}\|_{\infty} & \leq cst \frac{e^{\lambda T}}{\lambda^{m-1-m_0}\sqrt{\varepsilon}^{m_0+2}}\left(\| \tilde{\b w}\|_{m,\lambda,\varepsilon} + \|\sqrt{\varepsilon} \partial_d \tilde{\b w}\|_{m-1,\lambda, \varepsilon} \right)
\end{align*}
Thanks to the inequality (\ref{partial_d_estim}), we have:
\begin{equation*}
\|\tilde{\b w}\|_{\infty} + \|\mathcal{T} \tilde{\b w}\|_{\infty} \leq cst \frac{e^{\lambda T} \varepsilon^{M-\frac32}}{\lambda^{m-m_0}\sqrt{\varepsilon}^{m_0+2}}\left(\|\mathcal{T} \tilde{\b w}\|_{\infty} + \|\tilde{\b w}\|_{\infty} + \xi \, \zeta_m(\lambda) \sqrt{\varepsilon} \right) 
\end{equation*}
Setting $\lambda$ and considering $\varepsilon_2(\lambda) \in ]0,\varepsilon_1(1)]$ such that $cst \frac{e^{\lambda T} }{\lambda^{m-m_0}} \sqrt{\varepsilon_2(\lambda)} \leq \frac12$ leads to:
\begin{equation*}
\|\tilde{\b w}\|_{\infty} + \|\mathcal{T} \tilde{\b w}\|_{\infty} \leq \zeta_m(\lambda) \varepsilon^{M-\frac12 m_0-\frac52}
\end{equation*}
As $M-\frac12 m_0 - \frac52>0$, there exists $\varepsilon_3(\lambda) \in ]0,\varepsilon_2(\lambda)]$ such that for all $\varepsilon \leq \varepsilon_3(\lambda), \|\tilde{\b w}\|_{\infty} + \|\mathcal{T} \tilde{\b w}\|_{\infty} \leq \frac12$.
Using the equality (\ref{equa_partial_d}) with $M>\frac12 m_0 + 3$, we can assert that there exists $\varepsilon_0(\lambda)\in ]0,\varepsilon_3(\lambda)]$ for $\varepsilon$ sufficiently small, $\|\partial_d \tilde{ \b w}\|_{\infty} \leq \frac12$.

Thus, for $\lambda \geq \lambda_0$ and $\varepsilon \leq \varepsilon_0(\lambda)$: $\| \tilde{\b w} \|_{m,\lambda,\varepsilon} \leq \zeta_m(\lambda) \varepsilon^{M-1}$.
\end{proof}
\subsection{End of the proof of Theorem \ref{Th_reform}}

The first term $\b w^0=\b 0$ satisfies the initial assumptions ($\b w^0 \in \mathcal{A}(]-T_0,T[\times\mathbb{R}^d)$) and the estimates necessary to perform the induction. Besides, consider a fixed value of $\lambda \geq \lambda_{0}(1)$ (\emph{i.e.} $R=1$) and $ \varepsilon \in ]0,\varepsilon_0(\lambda)]$, if $\|\b w^k\|_{m,\lambda,\varepsilon}\leq \zeta_m(\lambda) \varepsilon^{M-1}$ (for any $m \geq m_0$), and $\|\b w^k\|_{\infty} + \|\nabla \b w^k\|_{\infty} \leq 1$, the previous results let us assert that:
\begin{align*}
\|\b w^{k+1}\|_{\infty} + \|\nabla \b w^{k+1}\|_{\infty} & \leq 1\\
\| \b w^{k+1}\|_{m,\lambda,\varepsilon} &\leq \zeta_m(\lambda)\varepsilon^{M-1}\\
\|\sqrt{\varepsilon} \partial_d \b w^{k+1}\|_{m-1,\lambda,\varepsilon} & \leq D(\lambda)\varepsilon^{M-1}
\end{align*}
We conclude, by induction, that the sequence $(\b w^k)$ is bounded for the norm $\|.\|_{m,\lambda,\varepsilon}$ and thus for the $L^2$ norm.

In order to obtain the convergence of $(\b w^k)$, we will prove that this is a Cauchy sequence. Let us remind the iterative scheme:
\begin{equation*}
\sum_{j=0}^{d}{\b A_j(\varepsilon \b w^k) \partial_j \b w^{k+1}} - \b B(\varepsilon  \b w^k)\b w^{k+1} + \frac{\chi}{\varepsilon} \b P \b w^{k+1} = - \varepsilon^{M-1} \b R_{\varepsilon}
\end{equation*}
Let us take the difference between the two systems for $\b w^{k+1}$ (see above) and for $\b w^{k+2}$ :
\begin{equation*}
\left(\sum_{j=0}^{d}{\!\! \b A_j(\! \varepsilon \b w^k) \partial_j} - \! \b B(\! \varepsilon  \b w^k)\! + \! \frac{\chi}{\varepsilon} \b P \!\! \right) \!\! \left(\! \b w^{k \! + \! 1} \!\!\!- \!\b w^{k \! + \! 2}\right) \! =\!-\!\sum_{j=0}^{d}{\!\! \left(\! \b A_j(\! \varepsilon \b w^{k \! + \! 1}) \! - \! \b A_j(\! \varepsilon \b w^{k})\right) \! \partial_j \b w^{k \! + \! 2}} \! + \!
\left(\! \b B(\! \varepsilon  \b w^{k \! + \! 1}) \! - \! \b B(\! \varepsilon  \b w^k)\right) \! \b w^{k \! + \! 2}
\end{equation*}
As $\lambda\geq \lambda_0(1)$, the energy estimate of the proposition \ref{1st_energy_estim} gives, for $\varepsilon$ small enough ($cst$ is a constant):
\begin{equation*}
 \|\b w^{k+2}-\b w^{k+1}\|_{0,\lambda}\leq \frac{cst}{\lambda} \left\| \sum_{j=0}^{d}{\!\left(\b A_j(\varepsilon \b w^{k+1}) \! - \! \b A_j(\varepsilon \b w^{k})\right) \partial_j \b w^{k+2}} \! - \! \left(\b B(\varepsilon  \b w^{k+1}) \! - \! \b B(\varepsilon  \b w^k)\right) \! \b w^{k+2} \right\|_{0,\lambda}
\end{equation*}
As the matrices $\b A_j$ and $\b B$ have continuous coefficients regarding the variables $(\b y, \b v)$  and as $\| \b w^{k}\|_{\infty} \leq 1$, $\| \b w^{k+1}\|_{\infty} \leq 1$, $\| \b w^{k+2}\|_{\infty}+ \| \nabla \b w^{k+2}\|_{\infty} \leq 1$, we can prove that, for $\varepsilon$ sufficiently small:
\begin{equation*}
 \|\b w^{k+2}-\b w^{k+1}\|_{0,\lambda}\leq \frac12 \|\b w^{k+1}-\b w^{k}\|_{0,\lambda}
\end{equation*}
So $(\b w^k)$ is a Cauchy sequence for the norm $\|.\|_{0,\lambda}$ (thus also for $\|.\|_{L^2(\Omega_T)}$). Hence, the sequence $(\b w^k)$ converges towards $\b w \in L^2(\Omega_T)$.

To finish the proof of Theorem \ref{Th_reform}, it remains to claim that $\b w \in \mathcal{A}(\Omega_T)$ and that $\b w$ is a solution of the hyperbolic problem \ref{Syst_non_lin_pen_w}.

First, notice that, according to the distributional sense, $\mathcal{T} \b w^k \to \mathcal{T} \b w$ and $\partial_d \b w^k \to \partial_d \b w$.

As $(\b w^k)$ is bounded for the norm $\| . \|_{m,\lambda,\varepsilon}$, this sequence has a subsequence which converges weakly in $H^m_{tan}(\Omega_T)$ and in $H^1(\Omega_T)$ (because $\|\partial_d \b w^k\|_{0,\lambda}$ is also bounded). So $\b w \in H^m_{tan}(\Omega_T) \cap H^1(\Omega_T)$.

Thanks to the Lebesgue's dominated convergence theorem, $\b v_{\varepsilon}=\b v_a + \varepsilon \b w$ is a solution of the penalized hyperbolic problem (\ref{Penalized_syst}). Besides $L^2$ energy estimates let us ensure the uniqueness of $\b v_{\varepsilon}$.

By induction on $p \in \mathbb{N}$, we can prove that, for each $p\leq m$, $\partial_d^p \b v_{\varepsilon | \, x_d>0} \in H^{m-p}_{tan}(]-T_0,T[\times \mathbb{R}^d_+)$ and $\partial_d^p \b v_{\varepsilon | \, x_d<0} \in H^{m-p}_{tan}(]-T_0,T[\times \mathbb{R}^d_-)$.
\begin{itemize}
 \item The case $p=0$ has already been proven. 
 \item Assume that $p\leq m$ and that for all $k \in \{0,\dots, p-1\}$ , $\partial_d^{k} \b v_{\varepsilon}$ is in $H^{m-k}_{tan}(]-T_0,T[\times \mathbb{R}^d_+)$ and in $H^{m-k}_{tan}(]-T_0,T[\times \mathbb{R}^d_-)$.
 We have 
 \begin{equation*}
 \partial_d^{p} \b v_{\varepsilon}=\partial^{p-1}_d\left(\b A_d^{-1}(\b v_{\varepsilon}) \left( -\sum_{j=0}^{d-1}{\b A_j(\b v_{\varepsilon}) \partial_j \b v_{\varepsilon}}-\frac{\chi}{\varepsilon}\b P \b v_{\varepsilon}+\b f(\b v_{\varepsilon})\right)\right)
 \end{equation*}
So, according to the induction hypothesis and the regularity of the coefficients, we can prove that, for any $m \in \mathbb{N}, \partial_d^{p} \b v_{\varepsilon}$ is in $H^{m-p}_{tan}(]-T_0,T[\times \mathbb{R}^d_+)$ and in $H^{m-p}_{tan}(]-T_0,T[\times \mathbb{R}^d_-)$.
\end{itemize}

Finally $\b v_{\varepsilon}$ is in $H^{\infty}(]-T_0,T[\times \mathbb{R}^d_+)$ and in $H^{\infty}(]-T_0,T[\times \mathbb{R}^d_-)$.
So $\b v_{\varepsilon} \in \mathcal{A}(\Omega_T)$.

The error estimate is simply obtained considering $(\b v_a + \varepsilon \b w)_{|x_d>0} - \b v =  \varepsilon \b V^{1,+} + \dots + \varepsilon^M \b V^{M,+} + \varepsilon \b w$ in $]-T_0,T[\times \mathbb{R}^d_+$.

This finishes the proof of Theorem \ref{Th_reform}.

\section{A first example}\label{sect_example_lin}
This section contains a simple application of the main result for a one-dimensional and linear hyperbolic problem. The fact that the system is linear enables us to compare with others penalty methods such as \cite{For09, Rau79} obtained for this case.

For this simple example, $\bar{\b A}$ is a constant symmetric matrix of size $N\times N$ and $\b C$ is a constant matrix of size $p \times N$ whose the rank is $p \leq N$. 
\begin{equation*}\label{lin_system}
\displaystyle
\left\{\begin{array}{ll} 
\partial_t \b u(t, x) + \bar{\b A} \partial_x \b u(t, x) = \bar{\b f}(t, x) & (t, x) \in ]-T_0,T[ \times \mathbb{R}^1_+ \\
\b C \b u(t,0)=\b 0 & t \in ]-T_0,T[ \\
\b u_{|t<0} = \b 0 & 
\end{array}
\right.
\end{equation*}
Assume that all hypothesis of the section \ref{sect_main_result} are satisfied.
Besides, the submatrix $\b C_{p\times p}$ composed of the $p$ first columns of $\b C$ is supposed to be invertible.

The first step is the change of unknown. For the change of unknown of the lemma \ref{lemma_ch_unknown}, we choose:
\begin{equation*}
 \b v=\left(\begin{array}{c} \b C_{p\times p} \left(\begin{array}{c} u_1\\ \vdots \\ u_p \end{array}\right) \\ u_{p+1}\\ \vdots \\ u_N \end{array}\right)
\end{equation*}

In this case, the change of unknown $\b H$ and its gradient are the following linear maps:
\begin{equation*}
\b H : \b v \mapsto \left(\begin{array}{c} \b C_{p\times p}^{-1} \left(\begin{array}{c} v_1\\ \vdots \\ v_p \end{array}\right) \\ v_{p+1}\\ \vdots \\ v_N \end{array}\right)
\text{ and }
\nabla_{\b v} \b H(\b v)=\left(\begin{array}{c|c}
\b C_{p\times p}^{-1} 	&	\b 0				\\
\hline
				\b 0	&	\b {I}_{N-p}	\\			
\end{array}\right)
\end{equation*}
where $\b {I}_{N-p}$ is the identity matrix of $\mathbb{R}^{N-p}$.

Finally the penalty matrix is
\begin{equation*}
\b M =
\left(\begin{array}{c|c}
\b C_{p\times p}^\top 	&	\b 0			\\
\hline
				\b 0	&	\b{I}_{N-p}	\\			
\end{array}\right)
\left(\begin{array}{c|c}
\b{I}_p 	&	\b 0	\\
\hline
	\b 0	&	\b 0	\\		
\end{array}\right)
\left(\begin{array}{c|c}
\b C_{p\times p} 	&	\b 0			\\
\hline
	 			\b 0	&	\b{I}_{N-p}	\\			
\end{array}\right)
=\left(\begin{array}{c|c}
\b C_{p\times p}^\top \b C_{p\times p} 	&	\b 0			\\
\hline
				\b 0	&	\b 0	\\			
\end{array}\right)
\end{equation*}
and the penalized problem writes, in the original unknowns:
\begin{equation*}\label{lin_system_pen}
\displaystyle
\left\{\begin{array}{ll} 
\partial_t \b u_\varepsilon + \bar{\b A} \partial_x \b u_\varepsilon + \frac1\varepsilon \b M \b u_\varepsilon = \bar{\b f} & \text{ in } ]-T_0,T[ \times \mathbb{R} \\
\b u_{\varepsilon \, |t<0} = \b 0 & 
\end{array}
\right.
\end{equation*}

In the results of Rauch \cite{Rau79}, the generation of the penalty matrix needs to find a positive definite matrix $\b E$ such that $\ker \b C$ is the subspace of the eigenvectors associated to the negative or null eigenvalues of $\b E \bar{\b A}$. In this case, the penalization matrix is $\b \Psi^{\top} \b \Psi$ where $\b \Psi=\b O \b E^{\frac12}$ and $\b O$ represents any orthogonal matrix. Theorem 2.7 of the paper of Fornet and Gu\`es \cite{For09} proposes a penalization matrix of the form $\left(\b \Psi^{-1}\right)^{\top} \mathbb{P} \b \Psi^{-1}$ where $\mathbb{P}$ is the projector of $\mathbb{R}^N$ onto $\b \Psi^{-1} \ker \b C$. Finally, for the use of this two penalty methods (Rauch and Fornet-Gu\`es), the more difficult point is to find a suited matrix $\b E$ and to compute $\b E^{\frac12}$.

Our method is more direct, even in the linear case, and has been extended to the quasilinear case.
The main difficulty is the change of unknown, which is provided by the proof of the lemma \ref{lemma_ch_unknown}. Moreover, for this example, the expression of the penalty matrix is simple.

\section{An example of application in plasma physics}\label{Example_appl}
This section shows quickly how the penalty method presented in this paper can be applied for the numerical simulation of the edge plasma transport, for more details see \cite{Ang12}. In the toy model presented below, the first equation stands for the mass conservation and the second one for the momentum conservation. $N$ represents the plasma density, $\Gamma$ the plasma momentum and $M=\Gamma/N$ the Mach number. The space variable $x$ stands for the curvilinear coordinate along a magnetic field line.
\begin{equation*}
 \left\{\begin{array}{l}
 \partial_t N + \partial_x \Gamma = S_N\\
 \partial_t \Gamma + \partial_x\left(\dfrac{\Gamma^2}{N} + N\right)=S_{\Gamma}\\
 \left( \begin{array}{cc} M_0 & -1 \end{array} \right) \left( \begin{array}{c} N(t,0) \\ \Gamma(t,0) \end{array} \right) = 0
 \end{array}\right.
\end{equation*}
Where $S_N$ and $S_\Gamma$ are source terms of the hyperbolic problem.

Notice that the system is very similar to shallow water equations.
The change of variable used to reformulate the system is:
\begin{align*}
 \tilde{u}(t, x)=\ln \left( N(t, x) \right)\\
 \tilde{v}(t, x)=\dfrac{\Gamma(t, x)}{N(t, x)} - M_0
\end{align*}
Hence, only $\tilde{v}$ is affected by the boundary condition. 

Finally, the penalization obtained thanks to the results presented above is:
\begin{equation*}
 \left\{\begin{array}{l}
 \partial_t N + \partial_x \Gamma = S_N\\
 \partial_t \Gamma + \partial_x\left(\dfrac{\Gamma^2}{N} + N\right) + \frac{\chi}{\varepsilon}\left(\dfrac{\Gamma}{M_0}-N\right)=S_{\Gamma}\\
 \end{array}\right.
\end{equation*}

The main advantage of this method is the absence of spurious boundary layer: the error due to the penalization decreases with an optimal rate when the penalization parameter $\varepsilon$ tends to $0$.
The main drawback is due to the fact that the penalization is incomplete. Thus, at the boundary of the computational domain, we need to provide transparent boundary condition, at least for the non-penalized field $\tilde{u}$, which is not easy. Besides, non compatible initial boundary condition may generates artefact, see for intance, the numerical results of \cite{Ang12}.

\section{Conclusion}
This paper provides a penalty method to take into account of the boundary conditions of a non characteristic quasilinear hyperbolic problem which is in fact quite natural: after a change of unknown, one penalizes only the fields concerned by the boundary condition. An interesting feature of this method, is that the error due to the penalization has an optimal rate of convergence, \emph{i.e.} $\|\b u - \b u_{\varepsilon}\|_{H^m}=\mathcal{O}(\varepsilon)$. To focus our work on the penalization, we consider regular functions and solution null in the past to avoid initial condition compatibility issues.

This method has already been tested numerically in a one-dimensional non linear hyperbolic problem.

For further works, it might be interesting to extend this results to characteristic problems, such as in \cite{For09}.

\subsection{Acknowledgements}
This work has been funded by the ANR ESPOIR (Edge Simulation of the Physics Of ITER Relevant turbulent transport) and the \emph{F\'ed\'eration nationale de Recherche sur la Fusion par Confinement Magn\'etique} (FR-FCM). We thank Philippe Angot and Olivier Gu\`es for fruitful discussions and assistance.

\bibliographystyle{plain}
\bibliography{biblio_article}

\begin{thebibliography}{10}

\bibitem{Ang12}
Ph. Angot, T.~Auphan, and O.~Guès.
\newblock An optimal penalty method for an hyperbolic system modeling the edge
  plasma transport in a tokamak.
\newblock {\em Submitted}, 2012.

\bibitem{Ben07}
S.~Benzoni-Gavage and D.~Serre.
\newblock {\em Multidimensional hyperbolic partial differential equations.
  First-order systems and applications}.
\newblock Oxford Mathematical Monographs. Oxford University Press, 2007.

\bibitem{Ber11}
M.~Bergmann and A.~Iollo.
\newblock Modeling and simulation of fish-like swimming.
\newblock {\em Journal of Computational Physics}, 230(2):329 -- 348, 2011.

\bibitem{Car03}
G.~Carbou and P.~Fabrie.
\newblock Boundary layer for a penalization method for viscous incompressible
  flow.
\newblock {\em Adv. Differential Equations}, 8(12):1453--1480, 2003.

\bibitem{Cha82}
J.~Chazarain and A.~Piriou.
\newblock {\em Introduction to the Theory of Linear Partial Differential
  Equations}.
\newblock North-Holland Publishing Co., Amsterdam, 1982.
\newblock Translated from the French.

\bibitem{For09}
B.~Fornet and 0.~Guès.
\newblock Penalization approach of semi-linear symmetric hyperbolic problems
  with dissipative boundary conditions.
\newblock {\em Discrete and Continuous Dynamical Systems}, 23(3):827 -- 845,
  2009.

\bibitem{Gue90}
0.~Guès.
\newblock Problème mixte hyperbolique quasi-linéaire caractéristique.
\newblock {\em Communications in Partial Differential Equations}, 15:595--654,
  1990.

\bibitem{Jau12}
C.~Jause-Labert, F.S. Godeferd, and B.~Favier.
\newblock Numerical validation of the volume penalization method in
  three-dimensional pseudo-spectral simulations.
\newblock {\em Computers and Fluids}, 67(0):41 -- 56, 2012.

\bibitem{Khe08}
W.~Kheriji.
\newblock Pénalisation d'un demi-espace pour un opérateur hyperbolique
  quasilinéaire symétrique.
\newblock Master's thesis, Université de Provence, 2008.

\bibitem{Kol09}
D.~Kolomenskiy and K.~Schneider.
\newblock A fourier spectral method for the navier–stokes equations with
  volume penalization for moving solid obstacles.
\newblock {\em Journal of Computational Physics}, 228(16):5687 -- 5709, 2009.

\bibitem{Liu07}
Q.~Liu and O.~V. Vasilyev.
\newblock A {B}rinkman penalization method for compressible flows in complex
  geometries.
\newblock {\em Journal of Computational Physics}, 227(2):946 -- 966, 2007.

\bibitem{Pac05}
A.~Paccou, G.~Chiavassa, J.~Liandrat, and K.~Schneider.
\newblock A penalization method applied to the wave equation.
\newblock {\em Comptes Rendus Mécanique}, 333(1):79 -- 85, 2005.

\bibitem{Pes72}
C.~Peskin.
\newblock Flow patterns around heart valves.
\newblock In Henri Cabannes and Roger Temam, editors, {\em Proceedings of the
  Third International Conference on Numerical Methods in Fluid Mechanics},
  volume~19 of {\em Lecture Notes in Physics}, pages 214--221. Springer Berlin
  / Heidelberg, 1973.
\newblock 10.1007/BFb0112697.

\bibitem{Rau79}
J.~Rauch.
\newblock Boundary value problems as limits of problems in all space.
\newblock In {\em S\'eminaire Goulaouic-Schwartz, expos\'e No. 3}, 1978-1979.

\bibitem{Rau74}
J.~B. Rauch and F.~J. Massey~III.
\newblock Differentiability of solutions to hyperbolic initial-boundary value
  problems.
\newblock {\em Trans. Amer. Math. Soc.}, 189:303--318, 1974.

\end{thebibliography}

\section*{Appendix: Some recalls about notations}
\begin{tabular}{|p{0.17\textwidth}|p{0.75\textwidth}|}
\hline
Object							& Definition or explanations \\
\hline
$\nabla_{\b u}, \nabla_{\b v}$	& Partial derivatives relative to the vector $\b u$ or $\b v$ (respectively).\\
$\nabla$						& Gradient relative to the variables $t, \b x$.\\
$\b a$							& A function representing the dependence on $(t,\b x)$ of the coefficients of the hyperbolic problem.\\
$\b A_j(\varepsilon \b b)$ 		& $\begin{array}{l}\b A_j(\varepsilon \b b)=\b A_j(\b v_a + \varepsilon \b b)=\b A_j(. , . , \b v_a + \varepsilon \b b)\\
\quad = \left(\nabla_{\b v} \b H(. , . ,\!\b v_a \! + \! \varepsilon \b b)\right)^{-1}\! \b S\left(. , . ,\b H(. , . ,\!\b v_a \! + \! \varepsilon \b b)\right) \tilde{\b A}_j(. , . ,\!\b v_a \! + \!\varepsilon \b b) \nabla_{\b v} \b H(. , . ,\!\b v_a \! + \! \varepsilon \b b) \end{array}$\\
$\mathcal{A}(\Omega_T)$ 		& Functional space, see definition \ref{A-space}.\\
$cst, cst(R)$					& Constant which does not depend en $\tilde{\b w}, \b b, \varepsilon, \lambda$.\\
$H^m_{tan}(\Omega_T)$			& $H^m_{tan}(\Omega_T)= \left\{\b \Phi \in L^2(\Omega_T), \forall \alpha \in \mathbb{N}^d, |\alpha|\leq m \Longrightarrow \mathcal{T}^{\alpha} \b \Phi \in L^2(\Omega_T) \right\}$\\
$H^m(\Omega_T)$					& $H^m(\Omega_T)= \left\{\b \Phi \in L^2(\Omega_T), \forall \beta \in \mathbb{N}, \forall \alpha \in \mathbb{N}^d,  \beta+|\alpha|\leq m \Longrightarrow \partial_d^{\beta} \mathcal{T}^{\alpha} \b \Phi \in L^2(\Omega_T) \right\}$\\
$L^2(\Omega_T)$					& $L^2(\Omega_T) = \{\b \Phi : \Omega_T \to \mathbb{R}^N, \int_{\Omega_T}{ \langle \b \Phi(t,\b x) , \b \Phi(t,\b x) \rangle dt d\b x}\}$\\
$\mathcal{M}_N(\mathbb{R})$		& The set of square matrix of size $N \times N$.\\
$\b P$							& Projection matrix of rank $p$, $\b P \b v_{x_d=0}=\b 0$.\\
$t$								& Time variable.\\
$\mathcal{T}$					& Tangential derivatives.\\
$\b u$ 							& Solution of the initial hyperbolic problem.\\
$\b u_{\varepsilon}$			& Solution of the penalized hyperbolic problem.\\
$\b U, \b V, \b W$				& Any element of $\mathbb{R}^{N}$, eventually in a chosen neighbourhood of $\b 0$.\\
$\b v$							& Solution of the hyperbolic problem with the new unknown (to have the boundary condition $\b P \b v = \b 0$).\\
$\b v_a$						& First terms of the asymptotic expansion: $ \b v_a = \sum_{n=0}^M{\b V^{n \pm}}$\\
$\b v_{\varepsilon}$			& Solution of the penalized hyperbolic problem with the new unknown.\\
$\b w$							& $\b w = \frac{1}{\varepsilon}\left(\b v - \b v_a \right)$\\
$W^{1,\infty}(\Omega_T)$		& $W^{1,\infty}(\Omega_T) = \left\{\b \Phi \in L^{\infty}(\Omega_T), \nabla \b \Phi \in L^{\infty}(\Omega_T) \right\}$\\
$\b x, \b x'$					& $\b x=(x_1,\dots,x_d)=(\b x',x_d) \in \mathbb{R}^{d}$ space variable.\\
$\b y$							& Any element of $\mathbb{R}^{N'}$.\\
								& \\
$\varepsilon$					& Penalization parameter (normally next to $0$).\\ 
$\b \Theta(.,., \dots,., \b u) \! = \! \b 0$ & Boundary condition for the original hyperbolic boundary value problem.\\
$\Omega_T$ 						&$\Omega_T=]-T_0,T[ \times \mathbb{R}^d$\\
$\Omega_T^+$ 					&$\Omega_T^+=]-T_0,T[ \times \mathbb{R}^d_+$\\
$\Omega_T^-$ 					&$\Omega_T^-=]-T_0,T[ \times \mathbb{R}^d_-$\\
								& \\
$^{\top}$						& Matrix transposition.\\
$\partial_0=\partial_t$			& Time derivative.\\
$\partial_j$					& $\partial_j=\partial_{x_j}=\frac{\partial}{\partial x_j}$\\
$\mathbb{R}^d_+$				& $\{(x_1,\dots,x_d) \in \mathbb{R}^d, x_d>0\}$\\
$\mathbb{R}^d_-$				& $\{(x_1,\dots,x_d) \in \mathbb{R}^d, x_d<0\}$\\
$\langle . , . \rangle$			& Euclidean scalar product on $\mathbb{R}^N$.\\
$\langle . , . \rangle_{\mathbb{R}^{2 N}}$			& Euclidean scalar product on $\mathbb{R}^{2 N}$.\\
$\|.\|$							& Euclidean norm on $\mathbb{R}^N$.\\
$\langle . , . \rangle_{L^2(\Omega_T)}$ & Usual inner product on $L^2(\Omega_T)$.\\
\hline
\end{tabular}

\end{document}